\documentclass{amsart}

\setcounter{tocdepth}{2}

\usepackage[pdfusetitle, colorlinks=true]{hyperref}
\usepackage{color}

\bibliographystyle{plain}

\usepackage{amsmath, amsthm, amssymb, amsfonts, mathtools}
\usepackage{graphicx,epsfig,color}
\usepackage{scrpage2}
\usepackage{exscale}

\oddsidemargin0.45cm \textwidth15cm \topmargin0cm \textheight22cm

\theoremstyle{plain}

\newcommand{\VK}{k}

\newtheorem{theorem}{Theorem}[section]

\newtheorem{lemma}[theorem]{Lemma}

\newtheorem{corollary}[theorem]{Corollary}

\theoremstyle{definition}

\newtheorem{definition}[theorem]{Definition}

\theoremstyle{remark}
\newtheorem{remark}[theorem]{Remark}

\newtheorem{example}[theorem]{Example}

\newtheorem*{jjj}{Remark}

\DeclareSymbolFont{AMSb}{U}{msb}{m}{n}
\DeclareMathSymbol{\N}{\mathalpha}{AMSb}{"4E}
\DeclareMathSymbol{\R}{\mathalpha}{AMSb}{"52}
\DeclareMathSymbol{\Z}{\mathalpha}{AMSb}{"5A}
\DeclareMathSymbol{\D}{\mathalpha}{AMSb}{"44}
\DeclareMathSymbol{\s}{\mathalpha}{AMSb}{"53}

\newcommand{\Q}{\ensuremath{\mathbb{Q}}}

\DeclareMathOperator{\vol}{vol}

\DeclareMathOperator{\supp}{supp}

\DeclareMathOperator{\m}{m}

\DeclareMathOperator{\diam}{diam}

\newcommand{\T}{\mathcal{T}}
\DeclareMathOperator{\sign}{sign}

%
%
%
\title{The Heintze-Karcher inequality for metric measure spaces}\thanks{\textit{2010 Mathematics Subject classification}. Primary 53C21 30L99, Keywords: curvature-dimension condition, mean curvature, optimal transport, comparison geometry}
\author{Christian Ketterer}
\address{University of Toronto}
\thanks{
The author is funded by the Deutsche Forschungsgemeinschaft (DFG, German Research Foundation) -- Projektnummer 396662902.
}
\email{ckettere@math.toronto.edu.}
\begin{document}
\maketitle
\begin{abstract} In this note we prove  the Heintze-Karcher inequality  in the context of essentially non-branching metric measure spaces satisfying a lower Ricci curvature bound in the sense of Lott-Sturm-Villani. The proof is based on the needle decomposition technique for metric measure spaces introduced by Cavalletti-Mondino. Moreover, in the  class of $RCD$ spaces with positive curvature the equality case is characterized.
\end{abstract}
\tableofcontents
\section{Introduction}
The Heintze-Karcher theorem is a classical volume comparison result in Riemannian geometry \cite{heintzekarcher} (see also \cite{maeda}). It states that the one sided tubular neighborhood of a two sided $C^2$ hypersurface $S$ in an $n$-dimensional Riemannian manifold $M$ is bounded by a surface integral over $S$ involving  the mean curvature, a lower bound for the Ricci curvature and  an upper bound of the dimension $n$.
The original proof is based on Jacobi field computations and similar estimates were obtained in \cite{peralesheka} applying refined Laplace comparison estimates for manifolds with boundary.
When $M$ is equipped with a smooth measure $\Phi \m$, $\Phi\in C^{\infty}(M)$, a generalisation was proven by Bayle in \cite{baylethesis} (see also \cite{morganheka}) where Ricci  curvature is replaced by the Bakry-Emery $N$-Ricci curvature, the mean curvature with generalized mean curvature and the volume of $S$ with the weighted volume. 
The Heintze-Karcher estimate found numerous applications in Riemannian geometry (e.g. \cite{milman, peralesheka, mnwillmore}).

In this note we prove  Heintze and Karcher's theorem in the context of essentially non-branching metric measure spaces with finite measure satisfying a lower Ricci curvature bound in the sense of Lott-Sturm-Villani \cite{stugeo2, lottvillani}. 
More precisely, we consider  an essentially nonbranching $CD(K,N)$ space $(X,d,\m)$ for $K\in \R$ and $N\in [1,\infty)$ with finite measure $\m$
and a generalized hypersurface $S$ that is the boundary of a Borel subset $\Omega\subset X$ and satisfies $\m(S)=0$.
For this setup one can introduce a notion of mean curvature for $S$ using the $1D$-localisation technique for $1$-Lipschitz functions established by Cavalletti-Mondino \cite{cavmon, cav-mon-lapl-18} (see also previous work by Klartag, Cafarelli, Feldman and McCann \cite{klartag, cfm}).

Let us describe our approach. A precise construction is given in Section \ref{sec:mean}. Associated to $S=\partial \Omega$ we consider the signed distance function $d_S$ that is $1$-Lipschitz for $CD(K,N)$ spaces. Then, the localisation technique provides a measurable decomposition of the space into geodesic segments $\gamma_{\alpha}:(a_\alpha,b_\alpha)\rightarrow X$, $\alpha\in Q$, and a disintegration $\m= \int_Q\m_{\alpha} \mathfrak q(d\alpha)$ of the measure $\m$ with a quotient space $(Q,\mathfrak q)$.  The measure $\m_\alpha$ is supported on $\mbox{Im}(\gamma_{\alpha})$ and has a semiconcave density $h_{\alpha}$ w.r.t. $\mathcal H^1$ for $\mathfrak q$-a.e. $\alpha\in Q$. 
One can define the outer mean curvature in any point $p\in S$ satisfying $\gamma_\alpha(r)=p$ for some $\alpha\in Q$ and $r\in (a_\alpha,b_\alpha)$ such that $h_\alpha$ exists via $$\frac{d^+}{dr}\log h_{\alpha}(r)=\lim_{h\downarrow 0}h^{-1}\left(\log h_\alpha(r+h)-\log h_\alpha(r)\right)=:H^+(p).$$ 
Similar one defines the inner mean curvature in such a point as $-\frac{d^-}{dr}\log h_{\alpha}(r)=H^-(p)$. Then, the mean curvature in $p$ is defined as
\begin{align*}
H(p)=\max\left\{ H^+(p),-H^-(p)\right\}.
\end{align*}
This notion of generalized mean curvature will  be sufficient to prove the Heintze-Karcher estimate.
In smooth context,  $H^+=-H^-$ and $H$ will coincide with (minus) the classical mean curvature (Remark \ref{rem:classicalH}). Hence, our sign convention will be that the outer mean curvature of the boundary of a convex body is positive. The decomposition also allows to define a surface measure $\m_S$ that is supported on points $p\in S$ such that $\gamma_\alpha(r)=p$ for some $\gamma_\alpha$ and $r\in (a_\alpha,b_\alpha)$ such that $h_\alpha$ exists (Definition \ref{def:surfacemeasure}). Again in smooth context this will coincide with the classical notion (Remark \ref{rem:smooth}).

The main theorem of this note is the following.
\begin{theorem}\label{main}
Let $(X,d,\m)$ be an essentially non-branching metric measure space with $\m(X)<\infty$ satisfying the condition $CD(K,N)$ for $K\in \R$ and $N\in (1,\infty)$. Let $\Omega \subset X$ be a Borel subset such that $\m(\partial \Omega)=0$ and  $\partial \Omega =:S$ has finite outer curvature (see Definition \ref{def:meancurvature}).
Then
\begin{align}\label{ineq:hk0}
\m(S^+_t)=\m(B_t(\Omega)\backslash \Omega)\leq  \int\int_0^t J_{H^+(p),K,N}(r) dr d\m_S(p) \ \forall t\in (0, D].
\end{align}
where $B_t(\Omega)=\{x\in X: \exists p\in \Omega: d(x,p)<t\}$ and $D:=\diam_X$. 
$J_{H,K,N}$ is the Jacobian function (Definition \ref{jacobian}). 
\smallskip

If $S$ has finite curvature (Definition \ref{def:meancurvature}), then
\begin{align}\label{ineq:hk}
\m(X)\leq  \int\int_{[-D,D]} J_{H(p),K,N}(r) dr d\m_S(p).
\end{align}  
\end{theorem}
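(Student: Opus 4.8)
The plan is to run the one-dimensional localisation of Cavalletti--Mondino \cite{cavmon, cav-mon-lapl-18} along the $1$-Lipschitz signed distance function $u:=d_S$ and to reduce both inequalities to a one-dimensional density comparison along the needles. Since $(X,d,\m)$ is essentially non-branching and $CD(K,N)$, localising $u$ yields a transport set $\mathcal T$, a family of unit-speed transport rays $\gamma_\alpha\colon(a_\alpha,b_\alpha)\to X$, $\alpha\in Q$, parametrised so that $u(\gamma_\alpha(r))=r$, and a disintegration $\m\llcorner\mathcal T=\int_Q\m_\alpha\,\mathfrak q(d\alpha)$ with $\m_\alpha=h_\alpha\,\mathcal H^1\llcorner\mathrm{Im}(\gamma_\alpha)$, where for $\mathfrak q$-a.e.\ $\alpha$ the density $h_\alpha$ is a $CD(K,N)$ density: $h_\alpha>0$ on $(a_\alpha,b_\alpha)$, $h_\alpha^{1/(N-1)}$ is semiconcave and solves $(h_\alpha^{1/(N-1)})''+\tfrac{K}{N-1}h_\alpha^{1/(N-1)}\le 0$ distributionally. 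Two bookkeeping facts are needed and follow from $u$ being \emph{the signed distance to $S$}: first, $X\setminus\mathcal T\subseteq S$ (if $d_S(x)\neq 0$ then moving from $x$ towards a foot point of $x$ on $S$ exhibits a non-trivial transport ray through $x$), so $\m(X\setminus\mathcal T)=0$ by the hypothesis $\m(\partial\Omega)=0$; second, for $\mathfrak q$-a.e.\ $\alpha$ the ray $\gamma_\alpha$ meets $S$ in its interior, i.e.\ $0\in(a_\alpha,b_\alpha)$ (a ray with, say, $a_\alpha>0$ could be prolonged towards a foot point of $\gamma_\alpha(a_\alpha)$ on $S$, which is impossible outside the branching set). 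Writing $p_\alpha:=\gamma_\alpha(0)\in S$, the finiteness-of-curvature assumption then forces $h_\alpha(0)\in(0,\infty)$, and by the definitions (Definitions \ref{def:meancurvature}, \ref{def:surfacemeasure}) $H^+(p_\alpha)=\tfrac{d^+}{dr}\log h_\alpha(0)$, $H^-(p_\alpha)=-\tfrac{d^-}{dr}\log h_\alpha(0)$, and $\m_S=\int_Q h_\alpha(0)\,\delta_{p_\alpha}\,\mathfrak q(d\alpha)$. I expect these two negligibility statements to be the main obstacle; the rest is one-dimensional.

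The one-dimensional engine is the classical comparison for $CD(K,N)$ densities, which is exactly what $J_{H,K,N}$ of Definition \ref{jacobian} encodes ($J_{H,K,N}^{1/(N-1)}$ solves the model equation $\psi''+\tfrac{K}{N-1}\psi=0$ with $\psi(0)=1$ and initial log-slope $H$, truncated at its first zero). If $h$ is a $CD(K,N)$ density on an interval containing $0$ with $h(0)\in(0,\infty)$, then Sturm comparison of $h^{1/(N-1)}$ with this model gives
\begin{align*}
h(r)\le h(0)\,J_{a^+,K,N}(r)\quad(r\ge 0),\qquad h(r)\le h(0)\,J_{a^-,K,N}(-r)\quad(r\le 0),
\end{align*}
where $a^+:=\tfrac{d^+}{dr}\log h(0)$ and $a^-:=-\tfrac{d^-}{dr}\log h(0)$ is the right log-slope at $0$ of the reversed needle $s\mapsto h(-s)$; for $h=h_\alpha$ these read $a^+=H^+(p_\alpha)$, $a^-=H^-(p_\alpha)$. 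I will also use two elementary facts about the model Jacobian: the reflection identity $J_{H,K,N}(-r)=J_{-H,K,N}(r)$, and monotonicity, namely $H\mapsto J_{H,K,N}(r)$ is nondecreasing for each fixed $r\ge 0$ (after extracting the $(N-1)$-th root it is an increasing affine function of $H$ on the support).

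For \eqref{ineq:hk0} one checks $B_t(\Omega)\setminus\Omega=\{0<u<t\}$ up to the $\m$-null set $\bar\Omega\setminus\Omega\subseteq S$ (on $X\setminus\bar\Omega=\{u>0\}$ a minimising geodesic to $\bar\Omega$ meets $S$, so $d(\cdot,\Omega)=u$ there). Disintegrating, using $0\in(a_\alpha,b_\alpha)$, and applying the forward comparison on each ray (with $t\le D$, $b_\alpha\le D=\diam_X$ and $J\ge 0$ used only to enlarge the inner interval to $(0,t)$),
\begin{align*}
\m(B_t(\Omega)\setminus\Omega)=\int_Q\int_{(0,\min\{t,b_\alpha\})}h_\alpha\,d\mathcal H^1\;\mathfrak q(d\alpha)\le\int_Q h_\alpha(0)\int_0^t J_{H^+(p_\alpha),K,N}(r)\,dr\;\mathfrak q(d\alpha),
\end{align*}
which is \eqref{ineq:hk0} once the right-hand side is rewritten via $\m_S$.

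For \eqref{ineq:hk}, since $\m(X)=\m(\mathcal T)=\int_Q\m_\alpha(X)\,\mathfrak q(d\alpha)$, split $\m_\alpha(X)=\int_{a_\alpha}^0 h_\alpha+\int_0^{b_\alpha}h_\alpha$ and apply the two bounds to get $\m_\alpha(X)\le h_\alpha(0)\big(\int_0^{-a_\alpha}J_{H^-(p_\alpha),K,N}(s)\,ds+\int_0^{b_\alpha}J_{H^+(p_\alpha),K,N}(r)\,dr\big)$. Here semiconcavity of $h_\alpha^{1/(N-1)}$ forces $\tfrac{d^-}{dr}\log h_\alpha(0)\ge\tfrac{d^+}{dr}\log h_\alpha(0)$, i.e.\ $-H^-(p_\alpha)\ge H^+(p_\alpha)$, hence $H(p_\alpha)=\max\{H^+(p_\alpha),-H^-(p_\alpha)\}=-H^-(p_\alpha)$. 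Using $a_\alpha\ge -D$, $b_\alpha\le D$, $J\ge 0$, the reflection identity, and the monotonicity of $J_{\cdot,K,N}(r)$ for $r\ge 0$ (with $H^+(p_\alpha)\le H(p_\alpha)$),
\begin{align*}
\int_0^{b_\alpha}J_{H^+(p_\alpha),K,N}(r)\,dr&\le\int_0^D J_{H(p_\alpha),K,N}(r)\,dr,\\
\int_0^{-a_\alpha}J_{H^-(p_\alpha),K,N}(s)\,ds&=\int_{a_\alpha}^0 J_{H(p_\alpha),K,N}(r)\,dr\le\int_{-D}^0 J_{H(p_\alpha),K,N}(r)\,dr,
\end{align*}
where in the second line $J_{H^-(p_\alpha),K,N}(s)=J_{-H(p_\alpha),K,N}(s)=J_{H(p_\alpha),K,N}(-s)$. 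Adding these, $\m_\alpha(X)\le h_\alpha(0)\int_{[-D,D]}J_{H(p_\alpha),K,N}(r)\,dr$, and integrating over $\mathfrak q$ and rewriting via $\m_S$ gives \eqref{ineq:hk}. The only genuinely extra ingredients over \eqref{ineq:hk0} are the semiconcavity step (identifying $H=-H^-$) and the reflection/monotonicity manipulation of $J_{H,K,N}$; the delicate point of the whole argument remains the structural claim $\m(X\setminus\mathcal T)=0$ together with the fact that $\mathfrak q$-a.e.\ ray crosses $S$ in its interior.
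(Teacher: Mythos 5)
Your overall strategy matches the paper's: localise along $u=d_S$, use the $CD(K,N)$ needle densities, compare $h_\alpha^{1/(N-1)}$ with the model solution to get $h_\alpha(r)\le h_\alpha(0)J_{H^\pm,K,N}(\pm r)$, and combine the two one-sided bounds via the reflection identity $J_{-H,K,N}(r)=J_{H,K,N}(-r)$ and the monotonicity of $H\mapsto J_{H,K,N}(r)$. That part of your argument is fine, and the observation that semiconcavity forces $-H^-\ge H^+$ (so $H=-H^-$) is a correct, if unnecessary, sharpening.

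The problem is your ``bookkeeping fact 2''. You assert that for $\mathfrak q$-a.e.\ $\alpha$ the needle $\gamma_\alpha$ meets $S$ in the \emph{interior} of its domain, and you try to derive this from the structure of $d_S$ via an extension-to-the-foot-point argument. This cannot be right: that statement is exactly the content of the ``finite (outer) curvature'' hypothesis, and the paper's remark after Theorem~\ref{main} explicitly states the hypothesis is necessary even in the smooth case (surfaces with corners). Concretely, your parenthetical argument only handles $a_\alpha>0$; the genuine danger is $a_\alpha=0$, i.e.\ a needle that starts exactly on $S$ and lies entirely in $\overline{\Omega^c}$. Near a convex corner of $\partial\Omega$ a whole sector of points has transport rays of this kind, carrying positive $\mathfrak q$-mass, so the claim is false without the hypothesis. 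Even for $a_\alpha>0$ the parenthetical reasoning is incomplete: the branching set being $\m$-null does not imply that the set of rays whose endpoint is a branching point is $\mathfrak q$-null. You also misread the role of the hypothesis when you write that it ``forces $h_\alpha(0)\in(0,\infty)$'' --- if $0\in(a_\alpha,b_\alpha)$ then $h_\alpha(0)>0$ automatically from interior positivity, so that sentence would be vacuous if fact 2 were established; the assumption is rather $\m(B^\dagger_{out})=0$, which is what discards the needles failing $0\in(a_\alpha,b_\alpha)$ on the outer side. The fix is to drop the extension argument and invoke Definition~\ref{def:meancurvature} directly, as the paper does: since $B^\dagger_{out}$ (resp.\ $B^\dagger_{out}\cup B^\dagger_{in}$) has $\m$-measure zero and is a union of full rays, the corresponding set of indices is $\mathfrak q$-null, and the disintegration restricts to $\mathfrak Q(A^\dagger)$, where $0\in(a(X_\alpha),b(X_\alpha))$ by construction. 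With that repair, the rest of your proof coincides with the paper's.
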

%
%
\begin{remark}
The regularity assumption "finite outer curvature" in the sense of Definition \ref{def:meancurvature} is necessary even in smooth context for the validitiy of the statement above as surfaces with corners show.

\end{remark}
\begin{jjj}
To keep the presentation short 
we only consider $CD(K,N)$ spaces with $N>1$.
\end{jjj}

Theorem \ref{main} is a  generalisation of the Heintze-Karcher theorem and  specializes to the classical statement in smooth context.
The class of essentially nonbranching $CD$ spaces includes for instance finite dimensional $RCD$ spaces, weighted Finsler manifolds with lower bounds for their $N$-Ricci tensor and finite dimensional Alexandrov spaces.

We can assume an upper bound $H_0\in \R $ for the mean curvature and obtain the following Corollary.

\begin{corollary}\label{corollary} Let $S$ be as in Theorem \ref{main} with finite curvature. If  $H_S\leq H_0$, it follows
\begin{align*}
\m(X)\leq \m_S(S) \int_{[-D,D]} J_{H_0,K,N}(r)dr.
\end{align*}
If $H_S\leq 0$ and $K\geq 0$ then 
$
\m(X)\leq \diam_X \m_S(S).
$
\end{corollary}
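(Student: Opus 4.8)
The plan is to derive the corollary directly from inequality \eqref{ineq:hk} of Theorem \ref{main}. Under $H_S\le H_0$, i.e.\ $H(p)\le H_0$ for $\m_S$-a.e.\ $p\in S$, I would replace $J_{H(p),K,N}$ by $J_{H_0,K,N}$ in the double integral of \eqref{ineq:hk} using the monotonicity of the Jacobian function in its first argument (Definition \ref{jacobian}); once $J_{H(p),K,N}$ has been bounded by the $p$-independent $J_{H_0,K,N}$, the factor $\int_{[-D,D]}J_{H_0,K,N}(r)\,dr$ comes out of the $\m_S$-integral and leaves $\m(X)\le\m_S(S)\int_{[-D,D]}J_{H_0,K,N}(r)\,dr$. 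It is cleanest to carry out this replacement at the level of the individual needles $\gamma_\alpha$ produced in the proof of Theorem \ref{main}, where one has the pointwise bound $h_\alpha(r)\le h_\alpha(0)\,J_{H(p_\alpha),K,N}(r)$ along the domain of $\gamma_\alpha$, whose length is at most $D=\diam_X$ since $\gamma_\alpha$ is a geodesic; replacing $J_{H(p_\alpha),K,N}$ by $J_{H_0,K,N}$, integrating over the needle, integrating against $\mathfrak q(d\alpha)$, and using $\m_S(S)=\int_Q h_\alpha(0)\,\mathfrak q(d\alpha)$, gives the first inequality. For the second assertion I would take $H_0=0$: when $K\ge0$ the model density is a power of a generalised cosine, so $J_{0,K,N}\le1$, and combining this with the fact that each needle has length at most $\diam_X$ yields $\m(X)\le\diam_X\,\m_S(S)$.

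The content is Theorem \ref{main}, which I would prove by the $1D$-localisation of Cavalletti--Mondino applied to the $1$-Lipschitz signed distance $d_S$, as in Section \ref{sec:mean}. This gives a disintegration $\m=\int_Q\m_\alpha\,\mathfrak q(d\alpha)$ with each $\m_\alpha$ concentrated on a geodesic $\gamma_\alpha\colon(a_\alpha,b_\alpha)\to X$ and having a $CD(K,N)$ density $h_\alpha$ with respect to $\mathcal{H}^1$; normalising so that $d_S\circ\gamma_\alpha$ is the identity, $\gamma_\alpha$ meets $S=\{d_S=0\}$ in the single parameter $r=0$, at a foot point $p_\alpha$, for $\mathfrak q$-a.e.\ $\alpha$, and since $\m(S)=0$ these needles carry all of $\m$. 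The one-dimensional engine is the Riccati/Jacobian comparison encapsulated in Definition \ref{jacobian}: a $CD(K,N)$ density $h$ on $[0,L)$ with $h(0)>0$ and $\tfrac{d^+}{dr}\log h(0)\le H<\infty$ satisfies $h(r)\le h(0)\,J_{H,K,N}(r)$ on $[0,L)$, with $L$ no larger than the first zero of $J_{H,K,N}$. Applying this to the outgoing half $[0,b_\alpha)$ (with initial log-slope $H^+(p_\alpha)$) and to the reflected incoming half $s\mapsto h_\alpha(-s)$ on $[0,-a_\alpha)$ (with initial log-slope $H^-(p_\alpha)$), and combining the two using $H(p_\alpha)=\max\{H^+(p_\alpha),-H^-(p_\alpha)\}$ together with the log-semiconcavity of $h_\alpha$ at $r=0$ --- which forces $-H^-(p_\alpha)\ge H^+(p_\alpha)$ and hence $H(p_\alpha)=-H^-(p_\alpha)$ --- one gets $\int_{a_\alpha}^{b_\alpha}h_\alpha(r)\,dr\le h_\alpha(0)\int_{[-D,D]}J_{H(p_\alpha),K,N}(r)\,dr$ (with $J_{H,K,N}$ read as $0$ past its first zero); integrating against $\mathfrak q$ and using the definition of $\m_S$ gives \eqref{ineq:hk}. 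Inequality \eqref{ineq:hk0} is the same computation restricted to the half-needles lying in $\{0<d_S<t\}$, after noting that $B_t(\Omega)\setminus\Omega$ and $\{0<d_S<t\}$ agree up to an $\m$-null set (any geodesic from $x\notin\bar\Omega$ to $\Omega$ meets $S$, and $S\subseteq\bar\Omega$).

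The hard part, both here and in the corollary, will not be the Riccati comparison --- which is routine --- but the bookkeeping at the foot points: one must ensure that $\mathfrak q$-a.e.\ needle genuinely crosses $S$ with a density that is positive and has finite one-sided logarithmic derivatives there, so that the disintegrations of $\m(S_t^+)$ and $\m(X)$ are carried precisely by the needles contributing to $\m_S$ with positive weight. This is the purpose of the finite (outer) curvature hypothesis of Definition \ref{def:meancurvature}: the degenerate needles --- those with $h_\alpha(0)=0$ and $H^\pm(p_\alpha)=\pm\infty$ --- occur over corners of $S$, where the statement fails even in the smooth setting, as the remark after Theorem \ref{main} records. A secondary point to get right is the behaviour of $J_{H,K,N}$ under $r\mapsto-r$ and in $H$: the incoming half-needle is governed by $-H^-(p_\alpha)$ rather than by $H^+(p_\alpha)$, and it is precisely the appearance of $\max\{H^+,-H^-\}$ that makes the single symmetric integral $\int_{[-D,D]}J_{H(p),K,N}$ the right object in \eqref{ineq:hk}.
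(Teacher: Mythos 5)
Your proof is correct and follows essentially the same route as the paper: the first inequality is the monotonicity of $J_{H,K,N}$ in its first argument applied to \eqref{ineq:hk}, and the second is obtained by returning to the needle-level estimate and using $J_{0,K,N}\le 1$ for $K\ge 0$ together with the fact that each needle has length at most $\diam_X$ (the paper records this as $t\wedge\bigl[b(X_\alpha)-a(X_\alpha)\bigr]\le\diam_X$ and then lets $t\to\infty$). One point worth making explicit in a final write-up: substituting $H_0=0$ into the already-packaged inequality of the first claim only gives the constant $2\diam_X$, since $\int_{[-D,D]}J_{0,K,N}(r)\,dr\le 2D$; the sharp constant $\diam_X$ genuinely requires the per-needle bound $\int_{a_\alpha}^{b_\alpha}h_\alpha\le h_\alpha(0)\bigl(b(X_\alpha)-a(X_\alpha)\bigr)\le h_\alpha(0)\diam_X$, which you do invoke but which is logically separate from the first claim. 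Your side remark that semiconcavity of $h_\alpha^{1/(N-1)}$ forces $-H^-\ge H^+$ and hence $H=-H^-$ is correct; the paper keeps the $\max$ and does not exploit this simplification, but it changes nothing.
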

Let 
$\pi_\kappa$ be the diameter of a simply connected space form $\mathbb S_k^2$ of constant curvature $\kappa$, i.e.
$\pi_\kappa=
 \infty \ \textrm{ if } \kappa\le 0$ or $\pi_\kappa
=\frac{\pi}{\sqrt \kappa}\   \textrm{ if } \kappa> 0
$.
In the case of $K>0$ the generalized Heintze-Karcher estimate also takes the following form.
\begin{corollary}\label{corollary1}
Let $(X,d,\m)$ be a metric measure space and let $S$ be as in the Theorem \ref{main} with finite  curvature. Assume $K>0$.
Then
\begin{align*}
\m(X)\leq \int_0^{\pi_{K/(N-1)}}\sin^{N-1}\left(\scriptstyle{\sqrt{\frac{K}{N-1}}} r\right) dr \int \left(\textstyle{\frac{K}{N-1}+\left(\frac{H(p)}{N-1}\right)^2}\right)^{\frac{N-1}{2}} d\m_S(p).
\end{align*}
If $N\in \N$, we obtain
\begin{align*}
\m(X)\leq \frac{\vol(\mathbb S^{N}_{K/(N-1)})}{\vol(\mathbb{S}^{N-1}_1)} \int \left(\textstyle{\frac{K}{N-1}+\left(\frac{H(p)}{N-1}\right)^2}\right)^{\frac{N-1}{2}} d\m_S(p).
\end{align*}
\end{corollary}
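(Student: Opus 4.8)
The plan is to apply the second inequality of Theorem~\ref{main} and to carry out the inner $r$-integral by hand. Since $S$ has finite curvature, Theorem~\ref{main} gives
\[
\m(X)\le\int\Big(\int_{[-D,D]}J_{H(p),K,N}(r)\,dr\Big)\,d\m_S(p),
\]
so it suffices to bound, for each admissible value $H=H(p)\in\R$, the one-dimensional integral $\int_{[-D,D]}J_{H,K,N}(r)\,dr$ by the stated $p$-dependent constant and then integrate against $\m_S$. Thus from now on $H$ is a fixed real number.

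The second step is to unfold $J_{H,K,N}$ by Definition~\ref{jacobian}. Writing $\omega:=\sqrt{K/(N-1)}$, the function $J_{H,K,N}$ is the $(N-1)$-st power of the solution $v$ of $v''+\omega^2v=0$ with $v(0)=1$ and $v'(0)=H/(N-1)$, cut off to the connected component of $\{v>0\}$ containing $0$; this component is an interval of length $\pi/\omega=\pi_{K/(N-1)}$, so $\supp J_{H,K,N}\subseteq(-\pi_{K/(N-1)},\pi_{K/(N-1)})$ (consistently with $D=\diam_X\le\pi_{K/(N-1)}$ via Bonnet--Myers) and, $J_{H,K,N}$ being non-negative, $\int_{[-D,D]}J_{H,K,N}(r)\,dr\le\int_\R J_{H,K,N}(r)\,dr$. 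Using the elementary identity $\cos x+\lambda\sin x=\sqrt{1+\lambda^2}\,\cos(x-\varphi)$ I would rewrite $v(r)=A\cos(\omega(r-r_0))$ with amplitude
\[
A=\sqrt{1+\tfrac{H^2}{(N-1)^2\omega^2}}=\tfrac1\omega\Big(\tfrac K{N-1}+\big(\tfrac H{N-1}\big)^2\Big)^{1/2},
\]
and then integrate over the single bump: after shifting $r\mapsto r-r_0$ and substituting $s=\tfrac\pi2-\omega r$,
\[
\int_\R J_{H,K,N}(r)\,dr=A^{N-1}\!\int_{-\pi/(2\omega)}^{\pi/(2\omega)}\!\cos^{N-1}(\omega r)\,dr=A^{N-1}\,\tfrac1\omega\!\int_0^{\pi}\!\sin^{N-1}s\,ds .
\]
Since $\tfrac1\omega\int_0^\pi\sin^{N-1}s\,ds=\int_0^{\pi_{K/(N-1)}}\sin^{N-1}(\omega r)\,dr$, inserting the value of $A^{N-1}$ gives the first displayed inequality after integrating in $p$ against $\m_S$.

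For the reformulation when $N\in\N$ I would identify the numerical constant with a volume ratio of model spheres. Writing $\kappa=K/(N-1)$ and using geodesic polar coordinates on the space form $\mathbb S^N_\kappa$, whose metric is $dr^2+\omega^{-2}\sin^2(\omega r)\,g_{\mathbb S^{N-1}}$ with $r\in[0,\pi/\omega]$, one gets
\[
\vol(\mathbb S^N_\kappa)=\vol(\mathbb S^{N-1}_1)\!\int_0^{\pi/\omega}\!\big(\omega^{-1}\sin(\omega r)\big)^{N-1}dr=\vol(\mathbb S^{N-1}_1)\,\omega^{-N}\!\int_0^{\pi}\!\sin^{N-1}s\,ds ,
\]
hence $\omega^{-N}\int_0^\pi\sin^{N-1}s\,ds=\vol(\mathbb S^N_{K/(N-1)})/\vol(\mathbb S^{N-1}_1)$. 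Rewriting $A^{N-1}\,\tfrac1\omega\int_0^\pi\sin^{N-1}s\,ds=\big(\tfrac K{N-1}+(\tfrac H{N-1})^2\big)^{\frac{N-1}2}\,\omega^{-N}\int_0^\pi\sin^{N-1}s\,ds$ and substituting the volume identity turns the bound into the second display; integrating over $S$ against $\m_S$ concludes.

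I do not expect a genuine obstacle: all the geometry is already contained in Theorem~\ref{main}, and what remains is the elementary $r$-integration of an explicitly prescribed function. The two points that need a little care are (i) using that $J_{H,K,N}$ is cut off to a single ``bump'', so that passing to $\int_\R$ is legitimate (a periodic extension of $v^{N-1}$ would have infinite integral), and (ii) keeping the powers of $\omega=\sqrt{K/(N-1)}$ straight when going between $\int_0^\pi\sin^{N-1}s\,ds$, $\int_0^{\pi_{K/(N-1)}}\sin^{N-1}(\sqrt{K/(N-1)}\,r)\,dr$ and $\vol(\mathbb S^N_{K/(N-1)})/\vol(\mathbb S^{N-1}_1)$.
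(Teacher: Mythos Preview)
Your argument is correct and follows essentially the same route as the paper: both start from inequality \eqref{ineq:hk} of Theorem~\ref{main} and then evaluate $\int_{\R} J_{H,K,N}(r)\,dr$ by recognizing $r\mapsto \cos_{K/(N-1)}(r)+\tfrac{H}{N-1}\sin_{K/(N-1)}(r)$ as a translate of the model profile with amplitude $(\tfrac{K}{N-1}+(\tfrac{H}{N-1})^2)^{1/2}/\sqrt{K/(N-1)}$. The only cosmetic difference is that the paper extracts this amplitude via the conserved ``energy'' $(f')^2+\tfrac{K}{N-1}f^2=\tfrac{K}{N-1}+(\tfrac{H}{N-1})^2$, whereas you use the addition formula $\cos x+\lambda\sin x=\sqrt{1+\lambda^2}\cos(x-\varphi)$ directly; the subsequent identification with the sphere volume ratio for $N\in\N$ is identical.
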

\begin{remark}
The second estimate in the previous corollary also appears in the work of Heintze-Karcher \cite[2.2 Theorem]{heintzekarcher}.
\end{remark}

Recall that the class of $CD$ spaces can be enforced naturally to the class of $RCD$ spaces by requiring that the space of Sobolev functions is a Hilbert space.
For positive $K$ and in the context of $RCD(K,N)$ spaces with $N\in [1,\infty)$ the following theorem characterizes the equality case in \eqref{ineq:hk} and Corollary \ref{corollary1}.
\begin{theorem}\label{main2}
Let $(X,d,\m)$ be a metric measure space that satisfies the condition $RCD(K,N)$ for $K>0$ and $N\in (1,\infty)$ and let $S$ be as in Theorem \ref{main} with finite  curvature.

Equality in \eqref{ineq:hk} of Theorem \ref{main} or in Corollary \ref{corollary1} holds if and only if there exists an $RCD(N-2,N-1)$ space $(Y,d_Y,\m_Y)$ such that $X$ is a spherical suspension over $Y$:
\begin{align*}
X=I_{K,N}\times_{\sin\left(\scriptscriptstyle{\sqrt{\frac{K}{N-1}}}\cdot\right)}^{N-1} Y
\end{align*}
where $I_{K,N}=\left(\left[0,{\pi_{K/(N-1)}}\right], 1_{\left[0,\scriptstyle{\pi_{K/(N-1)}}\right]}\sin_{K/(N-1)}^{N-1}\mathcal L^1\right)$
and $S$ is a constant mean curvature surface in $X$. More precisely, $S$ is a sphere centered at one of the poles of $X$.
Here, we use the warped product notation for spherical suspensions (compare with the exposition in Subsection \ref{subsec:warped})
\end{theorem}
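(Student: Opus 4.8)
The plan is to establish both implications; the converse (warped product $\Rightarrow$ equality) is a direct computation. Suppose $X=I_{K,N}\times_{\sin_{K/(N-1)}}^{N-1}Y$ with $(Y,d_Y,\m_Y)$ an $RCD(N-2,N-1)$ space and $S=\{r_0\}\times Y$ a sphere about the pole $\{0\}\times Y$, $r_0\in(0,\pi_{K/(N-1)})$. I would first recall that such an $X$ is $RCD(K,N)$ with $\diam_X=\pi_{K/(N-1)}$, and use the spherical law of cosines for the distance in a spherical suspension to check that $d_S(t,y)=t-r_0$; hence the transport rays of $d_S$ are exactly the meridians $\{y\}\times I_{K,N}$, the disintegration of $\m$ along them has density $h_\alpha(r)=\sin_{K/(N-1)}^{N-1}(r_0+r)$ on $(-r_0,\pi_{K/(N-1)}-r_0)$, so $H(p)\equiv H^+(p)=\sqrt{K(N-1)}\cot(\sqrt{K/(N-1)}\,r_0)$ is constant, $J_{H(p),K,N}(r)=\bigl(\sin_{K/(N-1)}(r_0+r)/\sin_{K/(N-1)}(r_0)\bigr)_+^{N-1}$, and (by Definition \ref{def:surfacemeasure}) $\m_S=\int_Y\sin_{K/(N-1)}^{N-1}(r_0)\,\delta_{(r_0,y)}\,\m_Y(dy)$. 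Substituting these into the right-hand side of \eqref{ineq:hk} and using $D=\diam_X=\pi_{K/(N-1)}$ collapses it to $\m_Y(Y)\int_0^{\pi_{K/(N-1)}}\sin_{K/(N-1)}^{N-1}=\m(X)$, so \eqref{ineq:hk} is an equality; since $\operatorname{supp}J_{H(p),K,N}\subset[-D,D]$ the same computation gives equality in Corollary \ref{corollary1}.

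For the rigidity implication, assume equality in \eqref{ineq:hk}. The proof of Theorem \ref{main} produces \eqref{ineq:hk} from the $1$D localisation of $d_S$ by, for each needle $\gamma_\alpha$ crossing $S$ at $p_\alpha=\gamma_\alpha(r_\alpha)$, (a) a Sturm comparison of $h_\alpha$ with the model density $h_\alpha(r_\alpha)J_{H^\pm(p_\alpha),K,N}(\cdot)$ on the two sides of $r_\alpha$, (b) the inequalities $H^\pm(p_\alpha)\le H(p_\alpha)$ together with the monotonicity of $J_{\cdot,K,N}$ on $[0,\pi_{K/(N-1)}]$, and (c) an enlargement of the integration interval to $[-D,D]$. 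Equality must propagate through this chain, so I would extract, for $\mathfrak q$-a.e.\ $\alpha$: (i) $h_\alpha^{1/(N-1)}$ solves $g''+\frac{K}{N-1}g=0$ \emph{with equality} on $(a_\alpha,r_\alpha)$ and on $(r_\alpha,b_\alpha)$; (ii) $J_{H^+(p_\alpha),K,N}\equiv J_{H(p_\alpha),K,N}$ near $r_\alpha$, which by differentiating at $r_\alpha$ gives $H^+(p_\alpha)=H(p_\alpha)$, and since $g=h_\alpha^{1/(N-1)}$ is concave for $K>0$ one also has $-H^-(p_\alpha)=H(p_\alpha)$, so $g$ has no corner at $r_\alpha$ and is a single sine arc vanishing at $a_\alpha$ and $b_\alpha$; (iii) the enlargement in (c) is lossless, i.e.\ $J_{H(p_\alpha),K,N}$ vanishes on $[-D,D]$ outside $(a_\alpha-r_\alpha,b_\alpha-r_\alpha)$; since $\operatorname{supp}J_{H(p_\alpha),K,N}$ has length $\pi_{K/(N-1)}$ while $b_\alpha-a_\alpha\le\diam_X\le\pi_{K/(N-1)}$ by Bonnet--Myers, a short case analysis forces $b_\alpha-a_\alpha=\pi_{K/(N-1)}$, whence $\diam_X=\pi_{K/(N-1)}$.

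Thus a $\mathfrak q$-full set of needles are minimising geodesics realising the Bonnet--Myers bound, and the maximal diameter theorem for $RCD(K,N)$ spaces yields $X=I_{K,N}\times_{\sin_{K/(N-1)}}^{N-1}Y$ for an $RCD(N-2,N-1)$ space $(Y,d_Y,\m_Y)$. Since the maximal geodesics of a spherical suspension are its meridians, $\m$-a.e.\ point lies on a meridian $\{y\}\times I_{K,N}$, on which $d_S$ is affine of unit slope, say $d_S(t,y)=\varepsilon(y)(t-\rho(y))$ with $\varepsilon(y)\in\{\pm1\}$ and $\rho(y)\in(0,\pi_{K/(N-1)})$. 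Because $d_S$ is $1$-Lipschitz and $d((t,y),(t,y'))\le 2t$ for all $y,y'$ (both points lie within distance $t$ of the pole $\{0\}\times Y$), continuity of $d_S$ at the pole forces $\varepsilon(y)\rho(y)$ to be independent of $y$; as $\rho>0$, $\varepsilon$ is constant and $\rho\equiv r_0$. Hence $S=\{r_0\}\times Y$ is a sphere centred at a pole with constant mean curvature $H\equiv\sqrt{K(N-1)}\cot(\sqrt{K/(N-1)}\,r_0)$. Finally, equality in Corollary \ref{corollary1} is equivalent to equality in \eqref{ineq:hk}, since Corollary \ref{corollary1} is obtained from \eqref{ineq:hk} by replacing $[-D,D]$ with the full support $[-\pi_{K/(N-1)},\pi_{K/(N-1)}]$ of $J_{H(p),K,N}$ and equality in \eqref{ineq:hk} already forces $D=\pi_{K/(N-1)}$, so that replacement is vacuous.

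The main obstacle I anticipate is step (iii): one must check carefully that equality in the \emph{single scalar} inequality of Theorem \ref{main} genuinely forces, for $\mathfrak q$-a.e.\ needle, not merely the Sturm rigidity (i)--(ii) of the density but the \emph{maximality} $b_\alpha-a_\alpha=\pi_{K/(N-1)}$ of the needle, since it is only the latter that activates the maximal diameter theorem. The remaining work — identifying the needle decomposition of $d_S$ with the warped-product disintegration of $\m$, pinning down $\m_S$, and discarding the $\m_S$-null set of points lying at the end of a transport ray or off all rays — is routine measure-theoretic bookkeeping. (A minor technical point: in the borderline case where $\diam_Y$ itself attains the Bonnet--Myers bound one first adapts the choice of suspension structure to the foliation of $X$ by the transport rays of $d_S$.)
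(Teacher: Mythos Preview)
Your proposal is correct, and your worry about step~(iii) is unfounded: since $0\in(a(X_\alpha),b(X_\alpha))$ one has $b(X_\alpha)<b(X_\alpha)-a(X_\alpha)\le D$, so the vanishing of $J_{H,K,N}$ on the nondegenerate interval $[b(X_\alpha),D]$ combined with $h_\alpha>0$ on $(a(X_\alpha),b(X_\alpha))$ pins down $b(X_\alpha)$ (and symmetrically $a(X_\alpha)$) as the zeros of $J_{H,K,N}$, giving the maximal length $\pi_{K/(N-1)}$.

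However, your route to rigidity is genuinely different from the paper's. After extracting $h_\alpha=h_\alpha(0)J_{H(\gamma_\alpha(0)),K,N}$ on each needle, the paper does \emph{not} invoke the maximal diameter theorem. Instead it observes that, since $\m_\alpha$ is a probability measure, the normalised needle $(X_\alpha,h_\alpha\mathcal H^1)$ is isometric to the model $I_{K,N}$, whence $h_\alpha(0)=\mathcal I_{K,N,\infty}(v_\alpha)$ with $v_\alpha=\m_\alpha(\Omega)$. The Minkowski content is then $\bar\m^+(\partial\Omega)=\int\mathcal I_{K,N,\infty}(v_\alpha)\,d\bar{\mathfrak q}(\alpha)$, and concavity of $\mathcal I_{K,N,\infty}$ together with Jensen's inequality yields $\bar\m^+(\partial\Omega)\le\mathcal I_{K,N,\infty}(\bar\m(\Omega))$. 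This is the \emph{reverse} of the L\'evy--Gromov inequality, so equality holds there, and the Cavalletti--Mondino rigidity (Theorem~\ref{th:cavmon}) delivers in one stroke both the spherical suspension structure \emph{and} the identification of $\bar\Omega$ as a geodesic ball about a pole. Your approach trades this single black box for the (arguably more elementary) maximal diameter theorem, at the cost of the additional pole-continuity argument to locate $S$; the paper's approach is shorter but relies on the deeper isoperimetric rigidity. You also supply the easy converse (suspension $\Rightarrow$ equality), which the paper leaves implicit.
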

%
The rest of this note is organized as follows.  
In section 2 we briefly recall some facts about optimal transport and the Wasserstein space of a metric measure space, the curvature-dimension condition for essentially non-branching metric measure spaces, warped products, the Cavalletti-Mondino isoperimetric comparison and a general disintegration theorem for measure spaces.

In section 3 we explain the $1D$-localisation technique by Cavalletti-Mondino and how it applies in the context of essentially non-branching metric measure spaces satisfying $CD(K,N)$.

In section 4 we prove a simple comparison result in $1D$ that follows from Sturm's comparison theorem. 

In section 5 we introduce the signed distance function for a set $S$ that arises as boundary of a Borel set in a metric measure space. We describe briefly how the localisation technique applies for this functions. This structure allows us to define the mean curvature of $S$ and the generalized surface measure $\m_S$. We also show that these notions coincide with the classical ones in the context of weighted Riemannian manifolds.

In section 6 we prove the main theorems of this note.

 \subsection{Acknowledgments} 
The author want to thank Robert McCann, Vitali Kapovitch and Robert Haslhofer for valuable discussions about topics related to this work and Samuel Borza for pointing out the reference \cite{maeda}. Moreover, the author is grateful to Robert McCann for reading carefully an early version of this article. The author is also very grateful to the anonymous referee for giving useful comments and remarks (especially Remark \ref{rem:ballcondition}) that helped to improve the final version of this note.

\section{Preliminaries}

\subsection{Curvature-dimension condition}
In this subsection we recall some facts about optimal transport, the geometry of the Wasserstein space and synthetic Ricci curvature bounds. For more details we refer to \cite{viltot}. We also assume familarity with calculus on length metric spaces. For details we refer to \cite{bbi}.

Let $(X,d)$ be a metric space.
A rectifiable constant  speed curve $\gamma:[a,b]\rightarrow X$ is a \textit{geodesic} if $\mbox{L}(\gamma)=d(\gamma(a),\gamma(b))$ where $\mbox{L}$ is the induced length functional.
We say $(X,d)$ is a \textit{geodesic metric space} if for any pair $x,y\in X$ there exists a geodesic between $x$ and $y$.
The set of all constant speed geodesics $\gamma:[0,1]\rightarrow X$ is denoted with $\mathcal G^{[0,1]}(X)=:\mathcal{G}(X)$ and equipped with the topology of uniform convergence. For $t\in [0,1]$ we write $e_t:\gamma\in \mathcal G(X)\mapsto \gamma(t)$ for the evaluation map.

A set $F\subset \mathcal{G}(X)$ is said to be non-branching if and only if for any two geodesics $\gamma^1,\gamma^2\in \mathcal G(X)$ the following holds. 
$
\mbox{If }\gamma^1|_{[0,\epsilon)}\equiv \gamma^2|_{[0,\epsilon)} \mbox{ for some }\epsilon >0\ \mbox{ then }\ \gamma^1\equiv \gamma^2.
$

The set of Borel probability measures $\mu$ on $(X,d)$ such that $\int_Xd(x_0,x)^2d\mu(x)<\infty$ for some $x_0\in X$ is denoted $\mathcal P^2(X)$. For any pair $\mu_0,\mu_1\in \mathcal{P}^2(X)$ 
we denote with $W_2(\mu_0,\mu_1)$ the \textit{$L^2$-Wasserstein distance}.
%
%
We call the metric space $(\mathcal{P}^2(X),W_2)$ the \textit{$L^2$-Wasserstein space} of $(X,d)$.
The subspace of probability measures with bounded support is denoted with
$\mathcal{P}^2_b(X)$. 

\begin{definition}
A \textit{metric measure space} is a triple $(X,d,\m)=:X$ where $(X,d)$ is a complete and separable metric space and $\m$ is a locally finite Borel measure.
\end{definition}
The space of $\m$-absolutely continuous probability measures in $\mathcal{P}^2(X)$ is denoted by $\mathcal{P}^2(X,\m)$. Similar we define $\mathcal P_b^2(X,\m)$.

Any geodesic $(\mu_t)_{t\in [0,1]}$ in $(\mathcal P^2(X,\m),W_2)$ can be lifted to a measure $\Pi\in \mathcal P(\mathcal G(X))$ such that $(e_t)_{\#}\Pi=\mu_t$. We call such a measure $\Pi$ a {\it dynamical optimal plan}.

A metric measure space $(X,d,\m)$ is said to be essentially non-branching if for any two measures $\mu_0,\mu_1\in \mathcal P^2(X,\m)$ any dynamical optimal plan $\Pi$ is concentrated on a set of non-branching geodesics.

\begin{example}
Given a Riemannian manifold $(M,g)$  and a measure $\m=\Psi\vol_g$ for $\Psi\in C^{\infty}(M)$ we call the triple $(M,g,\m)$ a weighted Riemannian manifold.
\end{example}
\begin{definition}
For $\kappa\in \mathbb{R}$ we define $\cos_{\kappa}:[0,\infty)\rightarrow \mathbb{R}$ as the solution of 
\begin{align*}
v''+\kappa v=0 \ \ \ v(0)=1 \ \ \& \ \ v'(0)=0.
\end{align*}
$\sin_{\kappa}$ is defined as solution of the same ODE with initial value $v(0)=0 \ \&\ v'(0)=1$. 
%
For $K\in \mathbb{R}$, $N\in (0,\infty)$ and $\theta\geq 0$ we define the \textit{distortion coefficient} as
\begin{align*}
t\in [0,1]\mapsto \sigma_{K,N}^{(t)}(\theta)=\begin{cases}
                                             \frac{\sin_{K/N}(t\theta)}{\sin_{K/N}(\theta)}\ &\mbox{ if } \theta\in [0,\pi_{K/N}),\\
                                             \infty\ & \ \mbox{otherwise}.
                                             \end{cases}
\end{align*}
Note that $\sigma_{K,N}^{(t)}(0)=t$.
Moreover, using the convention $0\cdot \infty =0$ for $K\in \mathbb{R}$, $N\in [1,\infty)$ and $\theta\geq 0$ the \textit{modified distortion coefficient} is defined as
\begin{align*}
t\in [0,1]\mapsto \tau_{K,N}^{(t)}(\theta)=\begin{cases}
                                            \theta\cdot\infty \ & \mbox{ if }K>0\mbox{ and }N=1,\\
                                            t^{\frac{1}{N}}\left[\sigma_{K,N-1}^{(t)}(\theta)\right]^{1-\frac{1}{N}}\ & \mbox{ otherwise}.
                                           \end{cases}\end{align*}
\end{definition}

\begin{definition}[\cite{stugeo2,lottvillani}] An essentially non-branching metric measure space $(X,d,\m)$ satisfies the \textit{curvature-dimension condition} $CD(K,N)$ for $K\in \mathbb{R}$ and $N\in [1,\infty)$ if for every $\mu_0,\mu_1\in \mathcal{P}_b^2(X,\m)$ 
there exists a dynamical optimal coupling $\Pi$ between $\mu_0$ and $\mu_1$ such that for all $t\in (0,1)$
$$
\rho_t(\gamma_t)^{-\frac{1}{N}}\geq \tau_{K,N}^{(1-t)}(d(\gamma_0,\gamma_1))\rho_0(\gamma_0)^{-\frac{1}{N}}+\tau_{K,N}^{(t)}(d(\gamma_0,\gamma_1))\rho_1(\gamma_1)^{-\frac{1}{N}}
$$
$\mbox{for }\Pi\mbox{-a.e.}\ \gamma\in \mathcal{G}(X)$ and 
for all $t\in [0,1]$ where $(e_t)_{\#}\pi=\rho_t\m$.
%
%
\end{definition}
\begin{example}
The metric measure space $(M,d_g,\Psi \vol_g)$ associated to a weighted Riemannian manifold $(M,g,\Psi\vol_g)$ for $\Psi\in C^{\infty}(M)$ satisfies the condition $CD(K,N)$, $K\in \R$, $N\in [1,\infty)$, if and only if $M\backslash \partial M$ is geodesically convex and the Bakry-Emery $N$-Ricci tensor is bounded from below by $K$ on $M\backslash \partial M$.
\end{example}
\begin{definition}[\cite{agsriemannian, giglistructure, erbarkuwadasturm, cavmil, amsnonlinear}]\label{def:rcd}
The Riemannian curvature-dimension condition $RCD(K,N)$ for $K\in \R$ and $N\in [1,\infty)$ is defined as the condition $CD(K,N)$ together with the property  that the associated Sobolev space $W^{1,2}(X)$ is a Hilbert space.
\end{definition}
For a brief overview of the historical development of the previous definition we also refer to the preliminaries of \cite{Kap-Ket-18}.
\subsection{Warped products}\label{subsec:warped}
For $K>0$ and $N> 1$ the $1$-dimensional model space is $$I_{K,N}=\left(\left[0,{\pi_{K/(N-1)}}\right], 1_{\left[0,\scriptstyle{\pi_{K/(N-1)}}\right]}\sin_{K/(N-1)}^{N-1}\mathcal L^1\right)$$
where $[0,\scriptstyle{\pi_{K/(N-1)}}]$ is equipped with the restriction of the standard metric $|\cdot|$ on $\R$. The metric measure space
$I_{K,N}$ satisfies $CD(K,N)$ \cite[Example 1.8]{stugeo2}. 

Let $(M,g,\m)=M$ be a weighted Riemannian manifold with $\m={\Phi}\vol_g$ and $\Phi\in C^{\infty}(M\backslash \partial M)$. The warped product
$I_{K,N}\times_f^{N-1}M$ between $I_{K,N}$ and $M$ w.r.t. $f:I_{K,N}\rightarrow [0,\infty)$ is defined as the metric completion of the weighted Riemannian manifold
$\left(I_{K,N}\times M, h, \m_C\right)$
where $h=\langle\cdot,\cdot\rangle^2+ f^2 g$ and $\m_C=f^{N-1} \mathcal L^1|_{I_{K,N}} \otimes \m$. In \cite{ketterer1} it was proved that if the warping function $f$ satisfies 
\begin{align*}
f''+ \frac{K}{N-1}f\leq 0\ \mbox{ and }\  (f')^2+\frac{K}{N-1}f^2\leq L \mbox{ on } I_{K,N}
\end{align*}
and $(M,d_g,\m)$ satisfies $CD(L(N-2),N-1)$ then $I_{K,N}\times^{N-1}_{f} M$ satisfies $CD(K,N)$. This applies in particular when $f=\sin_{K/(N-1)}$ and $L=1$. Then the corresponding warped product is a spherical suspension. For instance, we can choose $M=I_{N-2,N-1}$. If $n\in \N$ we can choose 
$M=\mathbb{S}^{n-1}_{1}$ and we get that
$
I_{K,n}\times^{n-1}_{\sin_{K/(n-1)}}\mathbb S^{n-1}_1=\mathbb{S}_{K/(n-1)}^n.
$

More generally, one can define warped products in the context of metric measure spaces. In \cite{ketterer2} it was proved that 
$
I_{K,N-1}\times_{\sin_{{K}/{N-1}}}^{N-1} Y
$ 
satisfies the condition $RCD(K,N)$ if and only if $Y=(Y,d_Y,\m_Y)$ satisfies the condition $RCD(N-2,N-1)$.
\subsection{Isoperimetric profile}
Let $(X,d,\m)$ be a metric measure space such that $\m$ is finite, and let $A\subset X$. Denote $A_{\epsilon}=B_{\epsilon}(A)$ the $\epsilon$-tubular neigborhood of $A$. We also set $A^+_{\epsilon}=A_{\epsilon}\backslash A$. The (outer) Minkowski  content $\m^+(A)$ of $A$ is defined by
\begin{align*}\m^+(A)=
\limsup_{\epsilon\rightarrow 0}\frac{\m(A^+_{\epsilon})}{\epsilon}.
\end{align*}
The {\it isoperimetric profile function} $\mathcal{I}_{(X,d,\m)}: [0,1]\rightarrow [0,\infty)$ of $(X,d,\m)$ is defined as follows. Let $\bar \m=\m(X)^{-1}\m$ be the probability measure proportional to $\m$. Then, we set
\begin{align*}
\mathcal I_{(X,d,\m)}(v):= \inf\left\{\bar \m^+(A): A\subset X\mbox{ Borel }, \bar \m(A)=v\right\}.
\end{align*}
Let $K>0$. The model isoperimetric profile for spaces with Ricci curvature bigger or equal than $K$ and dimension bounded above by $N>1$  is given by 
$
\mathcal I_{K,N,\infty}(v):= \mathcal I_{I_{K,N-1}}(v), \ v\in [0,1],
$
where $I_{K,N}$ is again the $1$-dimensional model space that was introduced in the previous section.

The following theorem is one of the main results in \cite{cavmon} and generalizes the Levy-Gromov isoperimetric inequality for Riemannina manifolds.
\begin{theorem}[Cavalletti-Mondino]\label{th:cavmon}
Let $(X,d,\m)$ be an essentially non-branching $CD(K,N)$ space for $K>0$ and $N> 1$. 
Then for every Borel set $E\subset X$ it holds that 
\begin{align}\label{ineq:levigromov}
{\bar \m^+(E)}\geq \mathcal I_{K,N,\infty}\left({\bar \m(E)}\right).
\end{align}
If $(X,d,\m)$ satisfies the condition $RCD(K,N)$ and there exists $A\subset X$ with equality in \eqref{ineq:levigromov} then
\begin{align*}
X=I_{K,N}\times_{\sin_{K/(N-1)}}^{N-1} Y
\end{align*}
for some $RCD(N-2,N-1)$ space $(Y,d_{Y},\m_{Y})$.
Moreover, $\bar A$ is a closed geodesic ball of volume $\m(A)$ centered at one of the origins of $I_{K,N}\times^{N-1}_{\sin_{K/(N-1)}}Y$.
%
\end{theorem}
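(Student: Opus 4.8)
The plan is to follow the one-dimensional localisation of Cavalletti--Mondino (Section~3) together with the elementary one-dimensional comparison of Section~4. We may assume $v:=\bar\m(E)\in(0,1)$ and $\bar\m^+(E)<\infty$, the remaining cases being trivial. The first step is to localise the mean-zero function $f=1_E-v\in L^1(X,\bar\m)$: the localisation theorem then yields, up to a $\bar\m$-null set, a partition of $X$ into transport rays $\{X_q\}_{q\in Q}$ and a disintegration $\bar\m=\int_Q\bar\m_q\,\mathfrak q(dq)$ such that for $\mathfrak q$-a.e.\ $q$ either $X_q$ is a point, or $X_q$ is a minimising geodesic and $\bar\m_q=h_q\mathcal H^1|_{X_q}$ is a probability measure whose density $h_q$, read on the parameter interval, is a $CD(K,N)$ density; and, because the conditional mean of $f$ vanishes, $\bar\m_q(E)=v$ for $\mathfrak q$-a.e.\ $q$.

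The second step is to push the boundary measure through this disintegration. One has $\bar\m^+(E)\ge\mathrm{Per}(E)$ for every Borel set, and the total variation of $1_E$ splits superadditively along the rays, $\mathrm{Per}(E)\ge\int_Q\mathrm{Per}_{\bar\m_q}(E\cap X_q)\,\mathfrak q(dq)$, where on a one-dimensional $CD(K,N)$ density $\mathrm{Per}_{\bar\m_q}$ agrees with the one-dimensional Minkowski content of $E\cap X_q$. It then remains to prove the one-dimensional Lévy--Gromov inequality: if $h$ is a probability $CD(K,N)$ density on an interval and $\int_A h=v$, then the Minkowski content of $A$ is at least $\mathcal I_{K,N,\infty}(v)$, with equality only when $h$ is, up to reflection, the model density $\sin_{K/(N-1)}^{N-1}$ on $[0,\pi_{K/(N-1)}]$ and $A$ is an end-interval of the length prescribed by $v$. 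Here the comparison of Section~4 enters: concavity of $h^{1/(N-1)}$ dominates $h$ by the model, and a short argument shows the one-dimensional isoperimetric minimisers are always end-intervals, which reduces the claim to the explicit model computation. Integrating over $Q$ then gives $\bar\m^+(E)\ge\int_Q\mathcal I_{K,N,\infty}(v)\,\mathfrak q(dq)=\mathcal I_{K,N,\infty}(v)$.

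For the rigidity statement, assume $(X,d,\m)$ is $RCD(K,N)$ with $K>0$ and that $A$ realises equality. Propagating equality back through the above chain forces, for $\mathfrak q$-a.e.\ $q$, equality in the one-dimensional Lévy--Gromov inequality; by its rigidity case $\bar\m_q$ is the normalised model $\sin_{K/(N-1)}^{N-1}$ on $[0,\pi_{K/(N-1)}]$ and $A\cap X_q$ is the corresponding end-interval. In particular some transport ray is a minimising geodesic of length $\pi_{K/(N-1)}$, so $\diam(X)=\pi_{K/(N-1)}$, the maximal value permitted by the generalised Bonnet--Myers theorem for $CD(K,N)$ spaces. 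The maximal-diameter rigidity theorem for $RCD(K,N)$ spaces \cite{ketterer2} then yields $X=I_{K,N}\times_{\sin_{K/(N-1)}}^{N-1}Y$ for some $RCD(N-2,N-1)$ space $Y$; re-running the localisation in this concrete model identifies the common endpoints of the needles with the two poles and shows that $A$, being the same end-cap in every meridian, equals the closed geodesic ball of volume $\m(A)$ centred at one pole.

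The step I expect to be the main obstacle is the second one: transferring the genuinely codimension-one quantity $\bar\m^+$ (equivalently the perimeter $\mathrm{Per}$) through a disintegration whose transport rays are only measurably parametrised, since a tubular neighbourhood of $E$ a priori ignores the ray structure. Making the superadditive splitting and the attendant equality discussion rigorous is the technical heart of the argument. In the rigidity part, the corresponding difficulty is to upgrade the ray-by-ray ``model'' conclusion to a global warped-product structure, which is precisely where the Hilbertian ($RCD$) hypothesis is indispensable via the maximal-diameter theorem; the warped-product conclusion genuinely fails for general essentially non-branching $CD$ spaces.
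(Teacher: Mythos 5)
This theorem is not proved in the paper at all: it is quoted as a result of Cavalletti--Mondino, with the text simply citing \cite{cavmon} (the rigidity part resting in turn on the maximal-diameter/suspension theorem of \cite{ketterer2}). Your sketch reproduces the strategy of that cited proof essentially faithfully: localisation of the zero-mean function $1_E-\bar\m(E)$, so that $\bar\m_q(E)=v$ for $\mathfrak q$-a.e.\ needle, reduction to the one-dimensional L\'evy--Gromov inequality for $CD(K,N)$ densities, and, for rigidity, propagation of equality to a.e.\ needle, full needle length $\pi_{K/(N-1)}$, maximal diameter, and the suspension theorem. The one place where you deviate from the original argument is the transfer of $\bar\m^+$ to the needles: Cavalletti--Mondino do not pass through the perimeter and a superadditive splitting of the total variation along rays (itself a nontrivial lemma); they use instead that each needle $X_q$ is an isometrically embedded segment, so $B_\epsilon(E)\cap X_q$ contains the intrinsic $\epsilon$-neighbourhood of $E\cap X_q$ inside $X_q$, and then apply Fatou's lemma to the nonnegative quotients $\epsilon^{-1}\bigl(\bar\m_q(B_\epsilon(E))-\bar\m_q(E)\bigr)$, reducing matters directly to the one-dimensional (lower) Minkowski content; this is simpler and sidesteps the BV machinery you yourself flag as the main obstacle, so if you keep your route you must actually supply that splitting lemma. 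Also, in the rigidity part the final identification of $\bar A$ with a closed ball centred at a pole needs a genuine argument in the suspension model (that the needles are the meridians and that $A$ is, up to a null set, a sublevel set of the distance from a pole), not merely ``re-running the localisation''; this is carried out in \cite{cavmon}. With those two points made precise, your outline matches the known proof.
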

\subsection{Disintegration of measures} For further details about the content of this section we refer to \cite[Section 452]{fremlin}.

Let $(R,\mathcal R)$ be a measurable space, and let $\mathfrak Q:R\rightarrow Q$ be a map for a set $Q$. One can equip $Q$ with the  $\sigma$-algebra $\mathcal Q$ that is induced by $\mathfrak Q$ where  $B\in \mathcal Q$  if $\mathfrak Q^{-1}(B)\in \mathcal{R}$. Given a probability measure $\m$ on $(R,\mathcal R)$, one can define a probability measure $\mathfrak q$ on $Q$ via the pushforward $\mathfrak Q_{\#} \m=: \mathfrak q$.
\begin{definition}
A disintegration of $\m$ that is consistent with $\mathfrak Q$ is a map $(B,q)\in \mathcal R \times Q\mapsto \m_{\alpha}(B)\in [0,1]$ such that the following holds
\begin{itemize}
\item $\m_{\alpha}$ is a probability measure on $(R,\mathcal R)$ for every $\alpha\in Q$,
\item $\alpha\mapsto \m_{\alpha}(B)$ is $\mathfrak q$-measurable for every $B\in \mathcal R$,
\end{itemize}
and for all $B\in \mathcal R$ and $C\in \mathcal Q$ the {\it consistency condition} 
$$
\m(B\cap \mathfrak Q^{-1}(C))=\int_C \m_{\alpha}(B) \mathfrak q(d\alpha)
$$
holds. We use the notation $\{\m_{\alpha}\}_{\alpha\in Q}$ for such a disintegration. We call the measures $\m_{\alpha}$ {\it conditional probability measures}.

A disintegration $\{\m_{\alpha}\}_{\alpha\in Q}$ is called strongly consistent with respect $\{\mathfrak Q^{-1}(\alpha)\}_{\alpha\in Q}$ if for $\mathfrak q$-a.e. $\alpha$ we have $\m_{\alpha}(\mathfrak Q^{-1}(\alpha))=1$.  
\end{definition}
\begin{theorem}
Assume that $(R,\mathcal R, \m)$ is a countably generated probabilty space and $R=\bigcup_{\alpha\in Q}R_{\alpha}$ is a partition of $R$. Let $\mathfrak Q: R\rightarrow Q$ be the quotient map associated to this partition, that is $\alpha =\mathfrak Q(x)$ if and only if $x\in R_{\alpha}$ and assume the corresponding quotient space $(Q,\mathcal Q)$ is a Polish space.

Then, there exists a strongly consistent disintegration $\{\m_{\alpha}\}_{\alpha\in Q}$ of $\m$ w.r.t. $\mathfrak Q:R\rightarrow Q$ that is unique in the following sense: if $ \{\m'_{\alpha}\}_{\alpha\in Q}$ is another consistent disintegration of $\m$ w.r.t. $\mathfrak Q$ then $\m_{\alpha}=\m'_{\alpha}$ for $\mathfrak q$-a.e. $\alpha \in Q$.
\end{theorem}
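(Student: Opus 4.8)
The plan is to realize $\{\m_{\alpha}\}_{\alpha\in Q}$ as a family of regular conditional probabilities of $\m$ given the sub-$\sigma$-algebra $\mathcal A:=\mathfrak Q^{-1}(\mathcal Q)\subseteq\mathcal R$, and then to upgrade the resulting disintegration to a strongly consistent one using that $(Q,\mathcal Q)$ is Polish. First I fix a countable algebra $\mathcal C\subseteq\mathcal R$ generating $\mathcal R$ (available since $(R,\mathcal R,\m)$ is countably generated, with $R,\emptyset\in\mathcal C$) and, for each $C\in\mathcal C$, a version of the conditional expectation $\EE[\mathbf 1_C\mid\mathcal A]$. Since this function is $\mathcal A$-measurable and $\mathcal A$ is the pullback of $\mathcal Q$ under $\mathfrak Q$, it factors as $g_C\circ\mathfrak Q$ for some $\mathcal Q$-measurable $g_C:Q\to[0,1]$, determined $\mathfrak q$-a.e.

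Working on a single $\mathfrak q$-conull set $Q_0\subseteq Q$, one then arranges simultaneously that $\alpha\mapsto g_C(\alpha)$ is additive over disjoint unions within $\mathcal C$, that $g_R\equiv 1$, and that $g_{C_n}(\alpha)\downarrow 0$ whenever $C_n\downarrow\emptyset$ in $\mathcal C$; this last point, i.e.\ countable additivity of the candidate set function on $\mathcal C$, is where a standardness input enters, supplied by a compact approximating class in (a Borel model of) $R$ along which continuity from above can be tested using a fixed countable approximation. By the Carathéodory extension theorem, for $\alpha\in Q_0$ the set function $C\mapsto g_C(\alpha)$ extends uniquely to a probability measure $\m_\alpha$ on $\mathcal R$; for $\alpha\notin Q_0$ put $\m_\alpha$ equal to some fixed probability measure. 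A monotone-class argument now promotes the identity $\m(C\cap\mathfrak Q^{-1}(D))=\int_D g_C\,d\mathfrak q$, valid for $C\in\mathcal C$ and $D\in\mathcal Q$, to the consistency condition $\m(B\cap\mathfrak Q^{-1}(D))=\int_D\m_\alpha(B)\,\mathfrak q(d\alpha)$ for all $B\in\mathcal R$, $D\in\mathcal Q$, and simultaneously yields $\mathfrak q$-measurability of $\alpha\mapsto\m_\alpha(B)$.

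To obtain strong consistency, i.e.\ $\m_\alpha(\mathfrak Q^{-1}(\alpha))=1$ for $\mathfrak q$-a.e.\ $\alpha$, I use that $Q$, being Polish, admits a countable point-separating family $\{D_n\}_{n\in\N}\subseteq\mathcal Q$, so that $\mathfrak Q^{-1}(\alpha)=\bigcap_n E_n(\alpha)$ with $E_n(\alpha)=\mathfrak Q^{-1}(D_n)$ if $\alpha\in D_n$ and $E_n(\alpha)=R\setminus\mathfrak Q^{-1}(D_n)$ otherwise. Since $\mathbf 1_{\mathfrak Q^{-1}(D_n)}$ is already $\mathcal A$-measurable, the consistency condition with $B=\mathfrak Q^{-1}(D_n)$ gives $\m_\alpha(\mathfrak Q^{-1}(D_n))=\mathbf 1_{D_n}(\alpha)$ for $\mathfrak q$-a.e.\ $\alpha$, hence $\m_\alpha(E_n(\alpha))=1$ for $\mathfrak q$-a.e.\ $\alpha$; intersecting these countably many conull sets yields $\m_\alpha(\mathfrak Q^{-1}(\alpha))=1$ for $\mathfrak q$-a.e.\ $\alpha$. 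For uniqueness, if $\{\m'_\alpha\}_{\alpha\in Q}$ is another consistent disintegration, the consistency condition applied to both forces $\int_D\m_\alpha(C)\,\mathfrak q(d\alpha)=\int_D\m'_\alpha(C)\,\mathfrak q(d\alpha)$ for every $D\in\mathcal Q$ and $C\in\mathcal C$, whence $\m_\alpha(C)=\m'_\alpha(C)$ for $\mathfrak q$-a.e.\ $\alpha$; intersecting over the countable family $\mathcal C$ and using that $\mathcal C$ generates $\mathcal R$ gives $\m_\alpha=\m'_\alpha$ for $\mathfrak q$-a.e.\ $\alpha$.

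The main obstacle is establishing countable additivity of the candidate conditional measures $\m_\alpha$ on the generating algebra $\mathcal C$: conditional expectations are defined only up to $\mathfrak q$-null sets, so the versions $g_C$ must be chosen coherently across countably many additivity and continuity relations among members of $\mathcal C$, and continuity from above must then be verified on one fixed $\mathfrak q$-conull set. This is precisely where the Polishness of $(Q,\mathcal Q)$ — equivalently, the option of realizing $(R,\mathcal R)$ as standard Borel — is indispensable, as it furnishes the compact (or concretely $[0,1]$-modelled) approximating class against which continuity from above is checked; without such standardness the conclusion genuinely fails. Everything else — the monotone-class verifications of the consistency condition and of measurability, the separation-of-points argument for strong consistency, and the uniqueness argument — is routine.
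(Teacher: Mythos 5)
The paper does not prove this statement: it is quoted as a black box from Fremlin's \emph{Measure Theory}, Section 452 (the form given is that of Bianchini--Caravenna and Cavalletti--Mondino). Your proposal therefore supplies a proof where the paper supplies a citation. The approach you take --- regular conditional probabilities on a countable generating algebra, coherent versions chosen on a single $\mathfrak q$-conull set, Carath\'eodory extension, monotone class to get full consistency and measurability, a countable separating family in the Polish quotient for strong consistency, and a $\pi$-$\lambda$ argument for uniqueness --- is exactly the standard construction and the right route. The strong-consistency and uniqueness steps as you describe them are correct.

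There is, however, a genuine gap in the central step, namely countable additivity of $C\mapsto g_C(\alpha)$ on the generating algebra $\mathcal C$. You correctly flag this as the crux and say that the needed compact approximating class is ``supplied by... a Borel model of $R$,'' but then attribute this input to Polishness of $(Q,\mathcal Q)$, calling it ``equivalently, the option of realizing $(R,\mathcal R)$ as standard Borel.'' This equivalence is false, and the misattribution leaves the key step unjustified. Polishness of the quotient says nothing about $R$: a countably generated probability space whose underlying set is a non-Borel subset of $[0,1]$ (with the trace Borel $\sigma$-algebra and outer measure) has the one-point space as a Polish quotient, yet is not standard Borel and need not carry a compact approximating class. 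Moreover, the map $\phi:R\to\{0,1\}^{\N}$ induced by the countable generating family is measurable, but preimages under $\phi$ of compact sets do \emph{not} automatically form a compact class in $R$: you would need $\phi(R)$ to be suitably closed (e.g.\ $R$ ``perfect'' or Radon), which is additional structure not present in ``countably generated probability space.'' The compactness hypothesis needed for countable additivity must be imposed on $R$ (Fremlin's Radon/quasi-Radon or compact-class hypotheses), and it is logically independent of the Polishness of $Q$, which serves only to furnish the separating family used for strong consistency. In the paper's actual applications $R=\T_u$ is a Borel subset of a Polish metric space, so the needed structure is present; but from the hypotheses as you state them, your proof cannot produce the compact class, and the Carath\'eodory step is not justified.
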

\section{$1$-localisation of generalized Ricci curvature bounds.}
\noindent
In this section we will briefly recall the localisation technique introduced by Cavalletti and Mondino. The presentation follows Section 3 and 4 in \cite{cavmon}. We assume familarity with basic concepts in optimal transport (for instance \cite{viltot}).

Let $(X,d,\m)$ be a locally compact metric measure space that is essentially nonbranching. We  assume that $\supp\m =X$ and $\m(X)<\infty$.

Let $u:X\rightarrow \mathbb{R}$ be a $1$-Lipschitz function. Then 
\begin{align*}
\Gamma_u:=\{(x,y)\in X\times X : u(x)-u(y)=d(x,y)\}
\end{align*}
is a  $d$-cyclically monotone set, and one defines $\Gamma_u^{-1}=\{(x,y)\in X\times X: (y,x)\in \Gamma_u\}$. The union $\Gamma\cup \Gamma^{-1}$ defines a relation $R_u$ on $X\times X$, and $R_u$ induces the {\it transport set with endpoints} 
$$\mathcal T_{u,e}:= P_1(R_u\backslash \{(x,y):x=y\in X\})\subset X$$ 
where $P_1(x,y)=x$. For $x\in \T_{u,e}$ one defines $\Gamma_u(x):=\{y\in X:(x,y)\in \Gamma_u\}, $
and similar $\Gamma_u^{-1}(x)$ and $R_u(x)$. Since $u$ is $1$-Lipschitz, 
 $\Gamma_u, \Gamma_u^{-1}$ and $R_u$ are closed as well as $\Gamma_u(x), \Gamma_u^{-1}(x)$ and $R_u(x)$.

The {\it forward} and {\it backward branching points} are defined respectively as
\begin{align*}
A_{+}\!:=\!\{x\in \mathcal T_{u,e}: \exists z,w\in \Gamma_u(x) \mbox{ \& } (z,w)\notin R_u\}, \ 
A_{-}\!:=\!\{x\in \mathcal T_{u,e}: \exists z,w\in  \Gamma_u(x)^{-1} \mbox{ \& } (z,w)\notin R_u\}.
\end{align*}
Then one considers the {\it (nonbranched) transport set} $\mathcal T_u:=\mathcal T_{u,e}\backslash (A_+ \cup A_-)$ and the {\it (nonbrached) transport relation} that is the restriction of $R_u$ to $\mathcal T_u\times \mathcal T_u$. 

As showed in \cite{cavmon}
$\T_{u,e}$, $A_{+}$ and $A_-$ are $\sigma$-compact, and $\T_u$ is a  Borel set.
In \cite{cavom} Cavalletti shows that the restriction of $R_u$ to $\mathcal T_u\times \mathcal T_u$ is an equivalence relation. 
Hence, from $R_u$ one obtains a partition of $\mathcal T_u$ into a disjoint family of equivalence classes $\{X_{\alpha}\}_{\alpha\in Q}$. There exists a measurable section $s:\T_u\rightarrow \T_u$, that is $s(x) \in R_u(x)$, and  $Q$ can be identified with the image of $\T_u$ under $s$. Every $X_{\alpha}$ is isometric to an interval $I_\alpha\subset\mathbb{R}$ via an isometry $\gamma_{\alpha}:I_\alpha \rightarrow X_{\alpha}$ where $\gamma_\alpha$ is parametrized such that $d(\gamma_\alpha(t), s(\gamma_\alpha(t)))=t$, $t\in I_\alpha$, for the section $s$ before. The map $\gamma_{\alpha}:I_\alpha\rightarrow X$ extends to a geodesic also  denoted $\gamma_{\alpha}$ and defined on the closure $\overline{I}_{\alpha}$ of $I_{\alpha}$. We set $\overline{I}_{\alpha}=[a(X_{\alpha}),b(X_{\alpha})]$.

The index set $Q$ can also be written as
\begin{align*}
Q=\bigcup_{n\in \N}Q_n \mbox{ where } Q_n=u^{-1}(l_n)\mbox{ and }l_n\in \Q 
\end{align*}
and $Q_i\cap Q_j$ for $i\neq j$, and $Q$ is equipped with the induced measurable structure \cite[Lemma 3.9]{cavmon}.
Then, the quotient map $\mathfrak Q:\T_u\rightarrow Q$ is measurable, and we set $\mathfrak q:= \mathfrak Q_{\#}\m$.

\begin{theorem}
Let $(X,d,\m)$ be a compact geodesic metric measure space with $\supp\m =X$ and $\m$ finite. Let $u:X\rightarrow \mathbb{R}$ be a $1$-Lipschitz function,  let $(X_{\alpha})_{\alpha\in Q}$ be the induced partition of $\mathcal T_u$ via $R_u$, and let $\mathfrak Q: \T_u\rightarrow Q$ be the induced quotient map as above.
Then, there exists a unique strongly consistent disintegration $\{\m_{\alpha}\}_{\alpha\in Q}$ of $\m|_{\T_u}$ w.r.t. $\mathfrak Q$. 
\end{theorem}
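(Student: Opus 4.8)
The plan is to deduce the statement directly from the abstract disintegration theorem recalled in Section~2, applied with $R=\T_u$, the partition $\{X_\alpha\}_{\alpha\in Q}$, and the normalized restriction of $\m$. Two hypotheses need to be checked: that $\T_u$, equipped with the normalized restriction of $\m$, is a countably generated probability space, and that the quotient space $(Q,\mathcal Q)$ is Polish.

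For the first point I would assume $\m(\T_u)>0$ (the case $\m(\T_u)=0$ being vacuous) and pass to $\bar\m_{\T_u}:=\m(\T_u)^{-1}\,\m|_{\T_u}$, a Borel probability measure on $\T_u$. Since $X$ is compact metric it is Polish, and $\T_u$ is a Borel subset of $X$ (recalled above from \cite{cavmon}); hence $(\T_u,\mathcal B(\T_u))$ is a standard Borel space, in particular countably generated, so $(\T_u,\mathcal B(\T_u),\bar\m_{\T_u})$ is a countably generated probability space. I would also observe that a map $(B,\alpha)\mapsto\m_\alpha(B)$ is a strongly consistent disintegration of $\m|_{\T_u}$ with respect to $\mathfrak Q$ if and only if it is one of $\bar\m_{\T_u}$ --- the consistency condition merely rescales by the constant $\m(\T_u)$, and the conditional probability measures are unchanged --- so it suffices to disintegrate $\bar\m_{\T_u}$.

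For the second point I would use the description of $Q$ recalled above: $Q=\bigcup_{n\in\N}Q_n$ with $Q_n\subset u^{-1}(l_n)$, $l_n\in\Q$, the sets $Q_n$ pairwise disjoint, and $Q$ carrying the $\sigma$-algebra induced by $\mathfrak Q$; by \cite[Lemma 3.9]{cavmon} each $Q_n$ is a Borel subset of the level set $u^{-1}(l_n)$. Each such level set is closed in the compact space $X$, hence a compact metric (thus Polish) space, so each $Q_n$ is standard Borel; a countable disjoint union of standard Borel spaces is standard Borel, hence Borel isomorphic to a Polish space. Therefore $(Q,\mathcal Q)$ is Polish, the quotient map $\mathfrak Q:\T_u\to Q$ is measurable, and $\mathfrak Q^{-1}(\alpha)=X_\alpha$ is precisely the prescribed partition element.

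With both hypotheses in place, the abstract disintegration theorem produces a strongly consistent disintegration $\{\m_\alpha\}_{\alpha\in Q}$ of $\bar\m_{\T_u}$, equivalently of $\m|_{\T_u}$, with respect to $\mathfrak Q$, unique in the stated sense (any two consistent disintegrations agree for $\mathfrak q$-a.e.\ $\alpha$). Strong consistency yields $\m_\alpha(X_\alpha)=1$ for $\mathfrak q$-a.e.\ $\alpha$, i.e.\ $\m_\alpha$ is concentrated on $X_\alpha=\mathrm{Im}(\gamma_\alpha)\cap\T_u$, which is the desired conclusion. I expect no serious obstacle: the only non-routine ingredients are already available as cited results, namely that $\T_u$ is Borel and that $Q$ admits a standard Borel structure (\cite[Lemma 3.9]{cavmon}, relying on Cavalletti's theorem \cite{cavom} that $R_u$ restricted to $\T_u\times\T_u$ is an equivalence relation); everything else is a routine verification of the hypotheses of the abstract theorem.
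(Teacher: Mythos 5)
The paper states this theorem without proof, treating it as imported from Cavalletti--Mondino; your task was therefore to reconstruct the argument, and the route you take --- feeding the setup into the abstract disintegration theorem of Section~2 --- is indeed the intended one. The verifications you give are sound: $X$ compact metric is Polish, $\T_u$ is a Borel subset of $X$ (as recalled from \cite{cavmon}), so $(\T_u,\mathcal B(\T_u))$ is standard Borel and hence countably generated; normalizing $\m|_{\T_u}$ to a probability measure changes both sides of the consistency identity by the same constant $\m(\T_u)$, so the conditional measures and the notion of strong consistency are unaffected; and the abstract theorem then delivers existence, $\mathfrak q$-a.e.\ uniqueness, and $\m_\alpha(X_\alpha)=1$ for $\mathfrak q$-a.e.\ $\alpha$.

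The one place to be more careful is the assertion that each $Q_n$ is a Borel subset of $u^{-1}(l_n)$. The index set $Q$ is realized as the image $s(\T_u)$ of a measurable section, and such images are a priori only analytic, not Borel; the role of \cite[Lemma~3.9]{cavmon} is precisely to show that, after discarding a $\mathfrak q$-null set, one may replace $Q$ by a $\sigma$-compact (hence Borel, hence standard Borel) set built from pieces inside the level sets $u^{-1}(l_n)$. Since the disintegration is only determined and only claimed up to $\mathfrak q$-null sets, this does not affect your conclusion, but it is cleaner to phrase the step as ``pass to a $\sigma$-compact subset of $Q$ of full $\mathfrak q$-measure, which is standard Borel'' rather than asserting Borelness of the $Q_n$ outright. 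With that adjustment the proof is complete and matches the argument the paper delegates to \cite{cavmon}.
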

Now, we assume that $(X,d,\m)$ is an essentially non-branching $CD(K,N)$ space for $K\in \R$ and $N> 1$. 
The following lemma is Theorem 3.4 in \cite{cavmon}.
\begin{lemma}\label{somelemma}
Let $(X,d,\m)$ be an essentially non-branching $CD(K,N)$ space for $K\in \R$ and $N\in (1,\infty)$ with $\supp \m=X$ and $\m(X)<\infty$.
Then, for any $1$-Lipschitz function $u:X\rightarrow \R$, it holds $\m(\T_{u,e}\backslash \T_u)=0$.
\end{lemma}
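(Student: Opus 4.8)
The plan is to argue by contradiction: if the branching set had positive $\m$-measure, I would use it to manufacture a genuinely branching dynamical optimal plan between two $\m$-absolutely continuous probability measures, which the essential non-branching hypothesis forbids. Concretely, note first that $\mathcal{T}_{u,e}\setminus\mathcal{T}_u=A_+\cup A_-$ by definition, and that replacing $u$ by $-u$ leaves $\mathcal{T}_{u,e}$ unchanged while interchanging $A_+$ and $A_-$ (since $\Gamma_{-u}=\Gamma_u^{-1}$), so it suffices to show $\m(A_+)=0$. Suppose $\m(A_+)>0$. Each $x\in A_+$ carries $z,w\in\Gamma_u(x)$ with $(z,w)\notin R_u$, i.e. $d(z,w)>|u(z)-u(w)|$. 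Using that $\Gamma_u,R_u,\Gamma_u^{-1}$ are closed and that $A_+$ is $\sigma$-compact, I would decompose $A_+$ into countably many bounded Borel pieces on which these separation data are uniform and on which there is moreover an ``uphill'' point $x^+\in\Gamma_u^{-1}(x)$ with $d(x,x^+)$ uniformly positive, with $z,w,x^+$ chosen in $\mathcal{T}_u$. I would fix a piece $A$ with $\m(A)>0$, use the Kuratowski--Ryll-Nardzewski selection theorem to choose Borel maps $x\mapsto z(x),w(x),x^+(x)$, and then slide each along the ray of $u$ through it so that $x^+(x)\in\{u=c_1\}$ and $z(x),w(x)\in\{u=c_0\}$ for fixed levels with $c_0<\inf_A u\le\sup_A u<c_1$.

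By $d$-cyclical monotonicity of $\Gamma_u$, the concatenations $\eta^1_x:=[x^+(x),x]\ast[x,z(x)]$ and $\eta^2_x:=[x^+(x),x]\ast[x,w(x)]$ are geodesics; since $d(x,z(x))=u(x)-c_0=d(x,w(x))$ and $d(x^+(x),x)=c_1-u(x)$, both have length $c_1-c_0$, so after reparametrizing on $[0,1]$ they run along the same nondegenerate initial interval (tracing $x^+(x)\to x$) and then branch, because $(z(x),w(x))\notin R_u$ rules out a common geodesic past $x$. Let $\mu_0$ be the normalized push-forward of $\m|_A$ under $x\mapsto x^+(x)$ and $\mu_1$ the average of the normalized push-forwards of $\m|_A$ under $x\mapsto z(x)$ and $x\mapsto w(x)$, and let $\Pi$ be the plan splitting the mass over $x^+(x)$ equally along $\eta^1_x$ and $\eta^2_x$. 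Thanks to the level-set normalization the support of the endpoint coupling $(e_0,e_1)_\#\Pi$ is $d^2$-cyclically monotone and lies on geodesics, so $\Pi$ is a dynamical optimal plan from $\mu_0$ to $\mu_1$; and for every non-branching $F\subset\mathcal{G}(X)$ one has $\Pi(F)\le\tfrac12$, so $\Pi$ is not concentrated on any non-branching set of geodesics.

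The contradiction with essential non-branching is reached as soon as $\mu_0,\mu_1\in\mathcal{P}^2(X,\m)$, and establishing this is the main obstacle — it is exactly the point where the hypothesis $CD(K,N)$ with $N\in(1,\infty)$ is indispensable. Here $\mu_0=\m(A)^{-1}\m|_A$ is $\m$-absolutely continuous for free, but $\mu_0,\mu_1$ are obtained from $\m|_A$ by sliding along the rays of $u$, and a priori such a slide could concentrate mass on an $\m$-null set. This is prevented by the regularity of $L^2$-optimal transport in essentially non-branching $CD(K,N)$ spaces with $N<\infty$: along a $W_2$-geodesic issued from an $\m$-absolutely continuous measure the interpolating (and, in this ``ray-parallel'' situation, endpoint) measures stay $\m$-absolutely continuous, with a one-dimensional $CD(K,N-1)$-type control (this is where $N-1>0$ enters) on the Jacobian of the rearrangement, bounded above and below on compact parts of the rays; hence $\mu_0,\mu_1\ll\m$. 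Granting this, $\Pi$ is a genuine dynamical optimal plan between elements of $\mathcal{P}^2(X,\m)$ that is not concentrated on a non-branching set, contradicting essential non-branching; therefore $\m(A_+)=0$, and by the $\pm$ symmetry $\m(A_-)=0$ as well, so $\m(\mathcal{T}_{u,e}\setminus\mathcal{T}_u)=0$. I expect the genuine work to lie in (i) performing all selections and the level-set normalization Borel-measurably, and (ii) the absolute-continuity estimate, which cannot be obtained without the curvature-dimension hypothesis.
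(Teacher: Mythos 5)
The paper itself does not prove this lemma; it cites it as Theorem~3.4 of Cavalletti--Mondino. Evaluating your proposal on its own merits, the overall strategy (contradiction by manufacturing a branching optimal plan) is the right one, and your observation that pushing the maps $x^+,z,w$ to \emph{fixed level sets} of $u$ forces every transported pair to have the same length, hence makes the endpoint coupling $d^2$-cyclically monotone and so genuinely $L^2$-optimal, is correct and is exactly the right device. However, there is a genuine gap at the step you yourself flag as ``the main obstacle'': the absolute continuity of $\mu_0$ and $\mu_1$. You claim this follows because ``along a $W_2$-geodesic issued from an $\m$-absolutely continuous measure the interpolating (and, in this `ray-parallel' situation, endpoint) measures stay $\m$-absolutely continuous.'' This is false. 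The good-geodesics property in $CD(K,N)$ spaces gives absolute continuity of the interpolants $\mu_t$ only for \emph{interior} times $t\in(0,1)$; the endpoint $\mu_1$ can be singular, and your maps $x^+,z,w$ are precisely endpoint maps. A concrete counterexample: in $\R^2$ with $u(x,y)=x$, the ray map onto $\{x=0\}$ is the projection onto the $y$-axis, which sends any set of positive Lebesgue measure to a Lebesgue-null set. So your $\mu_0=(x^+)_{\#}(\m|_A/\m(A))$ and $\mu_1$ may well be singular, in which case they lie outside $\mathcal P^2(X,\m)$ and the contradiction with the definition of essential non-branching evaporates. Note also that you are \emph{forced} into the uphill slide (and hence into this trap) because without it the two geodesics $[x,z(x)]$, $[x,w(x)]$ agree only at the single instant $t=0$, which the definition of a non-branching set permits; so the raw definition cannot be used with $\mu_0=\m|_A/\m(A)$ directly.

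The correct way to close the gap --- and the one underlying the cited Cavalletti--Mondino theorem, via Cavalletti--Huesmann / Gigli--Rajala--Sturm type results --- is to invoke not the bare definition but its consequence: in an essentially non-branching space satisfying $MCP$ or $CD(K,N)$, for $\mu_0\ll\m$ and \emph{arbitrary} $\mu_1\in\mathcal P^2_b(X)$ (singularity allowed), the $L^2$-optimal plan is unique and induced by a map. With that in hand one takes $\mu_0=\m|_A/\m(A)$ directly (absolutely continuous for free, no sliding), pushes $z,w$ to a common level set $\{u=c_0\}$ with $c_0<\inf_A u$ to secure $d^2$-cyclical monotonicity exactly as you do, and observes that $\pi=\tfrac12(\mathrm{Id},z)_{\#}\mu_0+\tfrac12(\mathrm{Id},w)_{\#}\mu_0$ is optimal yet not induced by a map, since $\mu_0$-a.e.\ $x$ splits mass. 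This yields the contradiction without ever pushing the absolutely continuous marginal forward, so the issue you flagged never arises. In short: right spirit and a correct monotonicity trick, but the absolute-continuity claim is wrong as stated, and the repair requires a different (stronger) consequence of essential non-branching than the one you try to use.
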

The initial and final points are defined as follows
\begin{align*}
\mathfrak a \!:=\! \left\{ x\in \T_{u,e}: \Gamma^{-1}_u(x)=\{x\}\right\}, \ 
\mathfrak b \!:=\! \left\{ x\in \T_{u,e}: \Gamma_u(x)=\{x\}\right\}.
\end{align*}
In \cite[Theorem 7.10]{cavmil} it was proved that under the assumption of the previous lemma there exists $\hat Q\subset Q$ with $\mathfrak q(Q\backslash \hat Q)=0$ such that for $\alpha\in \hat Q$ one has $\overline{X_\alpha}\backslash \T_u\subset \mathfrak a\cup \mathfrak b$. In particular, for $\alpha\in \hat Q$ we have
\begin{align}\label{somehow}
R_u(x)=\overline{X_\alpha}\supset X_\alpha \supset (R_u(x))^{\circ} \ \ \forall x\in \mathfrak Q^{-1}(\alpha)\subset \T_u.
\end{align}
where $(R_u(x))^\circ$ denotes the relative interior of the closed set $R_u(x)$. 
\begin{theorem}\label{th:1dlocalisation}
Let $(X,d,\m)$ be an essentially non-branching $CD(K,N)$  space with $\supp\m=X$, $\m(X)<\infty$, $K\in \R$ and $N\in (1,\infty)$.

Then, for any $1$-Lipschitz function $u:X\rightarrow \R$ there exists a disintegration $\{\m_{\alpha}\}_{\alpha\in Q}$ of $\m|_{\T_u}$ that is strongly consistent with $R_u$. 

Moreover, there exists $\tilde Q$ such that $\mathfrak q(Q\backslash \tilde Q)=0$ and $\forall \alpha\in \tilde Q$, $\m_{\alpha}$ is a Radon measure with $\m_{\alpha}=h_{\alpha}\mathcal{H}^1|_{X_{\alpha}}$ and $(X_{\alpha}, d, \m_{\alpha})$ verifies the condition $CD(K,N)$.

More precisely, for all $\alpha\in \tilde Q$ it holds that
\begin{align}\label{kuconcave}
h_{\alpha}(\gamma_t)^{\frac{1}{N-1}}\geq \sigma_{K/N-1}^{(1-t)}(|\dot\gamma|)h_{\alpha}(\gamma_0)^{\frac{1}{N-1}}+\sigma_{K/N-1}^{(t)}(|\dot\gamma|)h_{\alpha}(\gamma_1)^{\frac{1}{N-1}}
\end{align}
for every geodesic $\gamma:[0,1]\rightarrow (a(X_\alpha),b(X_\alpha))$.
\end{theorem}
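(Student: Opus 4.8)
The plan is to derive everything from a \emph{localisation of the $CD(K,N)$ condition to the individual rays}. The existence of a disintegration $\{\m_\alpha\}_{\alpha\in Q}$ of $\m|_{\T_u}$ strongly consistent with $R_u$ I would simply read off from the disintegration theorem of Section~2, applied to the partition $\{X_\alpha\}$ with quotient map $\mathfrak Q$ (using that $R_u|_{\T_u\times\T_u}$ is an equivalence relation and that $Q$ is Polish); by Lemma~\ref{somelemma} the set $\T_{u,e}\setminus\T_u$ is $\m$--null, so I may work on $\T_u$. The real content is then: for $\mathfrak q$--a.e.\ $\alpha$, $\m_\alpha=h_\alpha\mathcal H^1|_{X_\alpha}$, the density $h_\alpha$ satisfies \eqref{kuconcave}, and $(X_\alpha,d,\m_\alpha)$ is $CD(K,N)$ --- where the last two are one and the same assertion, since a one--dimensional model $([a,b],|\cdot|,h\,\mathcal L^1)$ satisfies $CD(K,N)$ precisely when $h^{1/(N-1)}$ is $\sigma_{K,N-1}$--concave ($\tau$ and $\sigma$ coincide in dimension one, the optimal map between absolutely continuous measures on an interval being monotone). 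If $X$ is not compact I would first truncate $u$ and exhaust so that the disintegration theorem applies verbatim; for $K>0$ compactness is automatic by Bonnet--Myers.

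For the localisation I would transport mass along the rays. Fix small $\epsilon>0$ and let $T_\epsilon\colon\T_u\to X$ push each $\gamma_\alpha(t)$ to the point of the same ray at oriented $u$--distance $\epsilon$ downstream, so that $(x,T_\epsilon x)\in\Gamma_u$ and $d(x,T_\epsilon x)=\epsilon$. For an absolutely continuous $\mu_0=\rho_0\m$ concentrated well inside the rays, set $\mu_1:=(T_\epsilon)_\#\mu_0$. Using that $u$ is $1$--Lipschitz one gets $W_2(\mu_0,\mu_1)=\epsilon$ with the graph coupling optimal, and --- the crucial point --- every $W_2$--optimal coupling is then concentrated on $\Gamma_u$, hence on $R_u$, hence disintegrates over $Q$ into couplings within single rays (distinct classes $X_\alpha,X_{\alpha'}$ carry no $R_u$--related pairs). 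Essential non--branching then pins down the unique dynamical optimal plan $\Pi$, whose geodesics must be reparametrised subsegments of the $\gamma_\alpha$; discarding the $\m$--null branching/endpoint set via Lemma~\ref{somelemma} once more, the interpolation is $\mu_t=(T_{t\epsilon})_\#\mu_0$ and stays concentrated on $\T_u$.

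Then I would feed this plan into the pointwise $CD(K,N)$ inequality and disintegrate over $\mathfrak Q$. Since $T_{t\epsilon}$ preserves each $X_\alpha$, it is compatible with $\mathfrak Q$, so by uniqueness of the disintegration the conditionals of $\mu_t=\rho_t\m$ are (up to normalisation) $\rho_t\m_\alpha$ and are the $T_{t\epsilon}$--images of the conditionals of $\mu_0$. Restricting the inequality $\rho_t(\gamma_t)^{-1/N}\ge\tau_{K,N}^{(1-t)}(\epsilon)\,\rho_0(\gamma_0)^{-1/N}+\tau_{K,N}^{(t)}(\epsilon)\,\rho_1(\gamma_1)^{-1/N}$ to the geodesics of $\Pi$ lying in a fixed $X_\alpha$ yields, for $\mathfrak q$--a.e.\ $\alpha$, the one--dimensional distortion inequality between absolutely continuous measures on $(X_\alpha,d,\m_\alpha)$. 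Letting $\epsilon$, the seed region and $\rho_0$ range over a countable generating family and merging the countably many null sets, I would conclude that $(X_\alpha,d,\m_\alpha)$ is $CD(K,N)$ for $\mathfrak q$--a.e.\ $\alpha$; the one--dimensional reformulation above then upgrades this to: $\m_\alpha$ is non--atomic, equals $h_\alpha\mathcal H^1|_{X_\alpha}$, and $h_\alpha$ satisfies \eqref{kuconcave} at every pair of interior points of $(a(X_\alpha),b(X_\alpha))$.

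The hard part will be the measure--theoretic bookkeeping the sketch above suppresses: producing a Borel ray map $(\alpha,t)\mapsto\gamma_\alpha(t)$ on an $\m$--conull domain; checking that the interpolants are genuinely the ray--wise ones (this is exactly where essential non--branching together with $\m(\T_{u,e}\setminus\T_u)=0$ are indispensable --- branching destroys the block--diagonal structure of the optimal coupling); arranging the competitor family so that the exceptional $\mathfrak q$--null set is independent of $\epsilon$ and of the pair of interior points, so that \eqref{kuconcave} holds simultaneously for all of them; and, most subtly, breaking the apparent circularity between ``$\m_\alpha\ll\mathcal H^1$'' and ``the distortion inequality holds'', which I would do either by deducing non--atomicity of $\m_\alpha$ directly from essential non--branching and the finiteness of $\m$, or by appealing to the intrinsic structure of one--dimensional $CD(K,N)$ spaces, whose reference measure is automatically absolutely continuous with a semiconcave (hence continuous) density.
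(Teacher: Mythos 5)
The paper does not prove Theorem~\ref{th:1dlocalisation}; it quotes it from Cavalletti--Mondino \cite{cavmon} (see also \cite[Theorem 7.10]{cavmil}), so there is no in-paper argument to compare against. Your sketch does, in outline, reproduce the Cavalletti--Mondino strategy: translate mass along the rays of $\Gamma_u$, show the resulting coupling is $W_2$-optimal and concentrated on $\Gamma_u\subset R_u$, use essential non-branching together with Lemma~\ref{somelemma} to force the dynamical optimal plan to be ray-wise, and then disintegrate the pointwise $CD(K,N)$ density inequality over $\mathfrak Q$. That skeleton is correct, and the reduction of \eqref{kuconcave} to the one-dimensional $\sigma_{K,N-1}$-concavity of $h_\alpha^{1/(N-1)}$ is also standard.

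There is, however, a genuine gap that you flag but do not close: the absolute continuity $\m_\alpha\ll\mathcal H^1|_{X_\alpha}$ cannot be extracted from your scheme as stated. To pass from $\rho_t(\gamma_t)^{-1/N}\geq\tau^{(1-t)}_{K,N}(\theta)\,\rho_0(\gamma_0)^{-1/N}+\tau^{(t)}_{K,N}(\theta)\,\rho_1(\gamma_1)^{-1/N}$ to a concavity inequality for $h_\alpha$ you must express $\rho_t|_{X_\alpha}$ as a density against $h_\alpha\,\mathcal H^1$, which already presupposes the absolute continuity you are trying to prove. Your first remedy (deduce ``non-atomicity'' from essential non-branching and finiteness of $\m$) is insufficient because non-atomicity is strictly weaker than absolute continuity; a singular-continuous $\m_\alpha$ is not excluded by those facts. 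Your second remedy (invoke the structure of one-dimensional $CD(K,N)$ spaces) is circular, since it assumes $(X_\alpha,d,\m_\alpha)$ is $CD(K,N)$, which is the conclusion. In the actual Cavalletti--Mondino proof the absolute continuity of the conditionals is established \emph{first}, as a separate statement, by a set-level translation/measure-comparison argument of $MCP$ type (together with a measurable-selection step and a countable exhaustion): translating a positive-$\m$-measure subset of $\T_u$ a definite distance along rays cannot shrink its measure below a computable fraction, and this forces $\m_\alpha\ll\mathcal H^1|_{X_\alpha}$ for $\mathfrak q$-a.e.\ $\alpha$. Only after that is the density-level translation argument you describe used to obtain \eqref{kuconcave}. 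So the plan is sound but incomplete: the absolute-continuity step must be separated out and proved at the level of sets, not densities, before the $CD(K,N)$ inequality can be localised as you propose.
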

\begin{remark} The property
\eqref{kuconcave} yields that $h_{\alpha}$ is locally Lipschitz continuous on $(a(X_\alpha),b(X_\beta))$ \cite[Section 4]{cavmon}, and that $h_{\alpha}:\R \rightarrow (0,\infty)$ satifies
\begin{align*}
\frac{d^2}{dr^2}h_{\alpha}^{\frac{1}{N-1}}+ \frac{K}{N-1}h_{\alpha}^{\frac{1}{N-1}}\leq 0 \mbox{ on $(a(X_{\alpha}),b(X_{\alpha}))$} \mbox{ in distributional sense.}
\end{align*}
\end{remark}
\begin{remark} The Bishop-Gromov volume monotonicity implies that $h_\alpha$ can always be extended to continuous function on $[a(X_\alpha),b(X_\alpha)]$ \cite[Remark 2.14]{cav-mon-lapl-18}. Then \eqref{kuconcave} holds for every geodesic $\gamma:[0,1]\rightarrow [a(X_\alpha),b(X_\alpha)]$.
We set $\left(h_{\alpha}\circ \gamma_{\alpha}(r)\right)\cdot1_{[a(X_{\alpha}),b(X_{\alpha})]}=h_{\alpha}(r)$
and consider $h_{\alpha}$ as function that is defined everywhere on $\R$. 
We also consider $h_{\alpha}': X_{\alpha}\rightarrow \R$ defined via 
$h_{\alpha}'(\gamma_{\alpha}(r))=h_{\alpha}'(r)$. 
\end{remark}
\begin{remark}\label{dagger}
In the following we set $Q^\dagger:= \tilde Q \cap \hat Q$. Then, $\mathfrak q(Q\backslash Q^\dagger)=0$ and  for every $\alpha \in Q^\dagger$ the inequality \eqref{kuconcave} and \eqref{somehow} hold. We also set $\mathfrak Q^{-1}(Q^{\dagger})=:\T_u^{\dagger}\subset \T_u$ and $\bigcup_{x\in \T_u^{\dagger}} R_u(x)=:\T_{u,e}^{\dagger}\subset \T_{u,e}$. 
\end{remark}
\section{$1$-dimensional comparison results}
\noindent
Let $u:[0,\theta]\rightarrow (0,\infty)$ 
%
such that
\begin{align}\label{kuconcavity}
u\circ\gamma(t)\geq \sigma_{\kappa}^{(1-t)}(|\dot{\gamma}|)u\circ \gamma(0) + \sigma_{\kappa}^{(t)}(|\dot{\gamma}|)u\circ\gamma(1)
\end{align}
for any constant speed geodesic $\gamma:[0,1]\rightarrow [0,\theta]$.
%
%
%
Then $u$ is semi-concave and therefore locally Lipschitz on $(0,\theta)$. $u$ satisfies $u''+\kappa u\leq 0$ in distributional sense \cite[Lemma 2.8]{erbarkuwadasturm}.
The  limits
\begin{align*}
\frac{d^+}{dr} u (r)=\lim_{h\downarrow 0} \frac{u(r+h)-u(r)}{h} \in \R\cup\{\infty\} \ \& \
\frac{d^-}{dr} u (r)=\lim_{h\downarrow 0} \frac{u(r-h)-u(r)}{-h}\in \R\cup\{-\infty\}
\end{align*}
exist for every $r\in [0,\theta]$ and are in $\R$ for $r\in (0,\theta)$. If $\frac{d^+}{dr} u(0)<\infty$ then $u$ is continuous in $0$. 
Moreover, $\frac{d^{+/-}}{dr}u$ is continuous from the right/left on $(0,\theta)$, and 
\begin{align}\label{ineq:derivatives}
\frac{d^+}{dr}u (r)\leq
\frac{d^-}{dr} u (r)
\end{align}
with equality if and only if $u$ is differentiable in $r\in (0,\theta)$. 
In particular $u$ is locally semi-concave, and $u$ is twice differentiable $\mathcal L^1$-almost everywhere. 
%
%
%
\begin{lemma}\label{lem:comparison}
Let $u:[0,\theta]\rightarrow (0,\infty)$ be as above. Let $r_0\in (0,\theta)$.
Then 
\begin{align*}
u(r)\leq u(r_0)\cos_{\kappa}(r-r_0) + \frac{d^+}{dr}u(r_0)\sin_\kappa(r-r_0) \ \mbox{ on }(r_0,\theta).
\end{align*}
In particular, the right hand side is positive on $(r_0,\theta)$.
\end{lemma}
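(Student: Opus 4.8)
The plan is to compare $u$ with the solution $v$ of the ODE $v''+\kappa v=0$ that matches $u$ to first order at $r_0$, namely $v(r)=u(r_0)\cos_\kappa(r-r_0)+\tfrac{d^+}{dr}u(r_0)\sin_\kappa(r-r_0)$, and show $u\le v$ on $(r_0,\theta)$. Since by hypothesis $u''+\kappa u\le 0$ in the distributional sense while $v''+\kappa v=0$, the difference $w:=u-v$ satisfies $w''+\kappa w\le 0$ distributionally, $w(r_0)=0$, and $\tfrac{d^+}{dr}w(r_0)\le 0$ (in fact the one-sided derivative of $v$ at $r_0$ equals $\tfrac{d^+}{dr}u(r_0)$, so this is $0$, but one should be slightly careful about whether $u$ is differentiable there). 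This is exactly a Sturm-type comparison statement, and I would phrase it as: if $w$ is semiconcave on an interval, $w''+\kappa w\le 0$ distributionally, and $w\le 0$ at the left endpoint $r_0$ with $\tfrac{d^+}{dr}w(r_0)\le 0$, then $w\le 0$ on $(r_0,\theta)$ — or, slightly more robustly, one reduces to the strict version and takes limits.

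The cleanest route avoids comparison ODE theory and argues directly. Consider the function $g(r):=\sin_\kappa(r-r_0)$, which is positive on $(r_0,r_0+\pi_\kappa)$ and satisfies $g''+\kappa g=0$. On $(r_0,\theta)$ where $g>0$, set $\phi:=w/g$. A standard manipulation gives that $g^2\phi'$ is nonincreasing: distributionally, $(g^2\phi')'=g^2\phi''+2gg'\phi'=g(w''+\kappa w)-g''w+(\text{lower order terms that cancel})\le 0$ using $w''+\kappa w\le 0$ and $g''=-\kappa g$. More precisely $(g w' - g' w)' = g w'' - g'' w = g(w''+\kappa w)\le 0$ distributionally, so the (BV) function $W:=g w' - g' w$ is nonincreasing on $(r_0,\theta)$; note $w'$ should here be read as the left or right derivative, both of which exist by semiconcavity and agree off a countable set. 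Evaluating the boundary behavior at $r_0^+$: since $w(r_0)=0$ and $g(r_0)=0$, $g'(r_0)=1$, one gets $\lim_{r\downarrow r_0} W(r)\le -w'(r_0^+)\le 0$ (again using $\tfrac{d^+}{dr}w(r_0)\le 0$; if $\tfrac{d^+}{dr}u(r_0)=\infty$ the statement is vacuous, and otherwise this is finite). Hence $W\le 0$ on all of $(r_0,\theta)$, i.e. $gw'-g'w\le 0$, i.e. $(w/g)'\le 0$ wherever $g>0$. Since $w/g\to 0$ as $r\downarrow r_0$ (because $w(r_0)=0$ and $w$ is Lipschitz near $r_0$, while $g$ vanishes linearly), monotonicity forces $w/g\le 0$, hence $w\le 0$, on $(r_0, r_0+\pi_\kappa)\cap(r_0,\theta)$. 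For $\kappa\le 0$ we have $\pi_\kappa=\infty$ and we are done; for $\kappa>0$ one must additionally rule out $\theta> r_0+\pi_\kappa$ — but here $\sin_\kappa$ would vanish inside $(0,\theta)$ forcing, via the comparison, $u$ to be nonpositive somewhere, contradicting $u>0$; this is precisely why the semiconcavity inequality \eqref{kuconcavity} with $\theta\ge\pi_\kappa$ and $u>0$ cannot happen, so $\theta<\pi_\kappa$ and $g>0$ throughout. This also yields the final sentence: the right-hand side $v(r)$ is $\ge u(r)>0$ on $(r_0,\theta)$.

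The main technical obstacle is handling the low regularity carefully: $u$ is only semiconcave, so $w'$ is a BV function with a distributional second derivative that is a signed measure $\le 0$ in the relevant sense, and the product-rule computation $(gw'-g'w)'=g(w''+\kappa w)$ must be justified at the level of measures (the singular parts of $w''$ are nonpositive, and the absolutely continuous smooth factor $g$ does not change that). I would make this rigorous by working with the right-derivative $\tfrac{d^+}{dr}u$, which is nonincreasing plus a smooth correction (precisely, $r\mapsto \tfrac{d^+}{dr}u(r) + \int_0^r \kappa u$ is nonincreasing by the distributional inequality), and then $W(r)=g(r)\tfrac{d^+}{dr}w(r)-g'(r)w(r)$ can be shown to be nonincreasing by a direct monotonicity argument on each interval of differentiability together with right-continuity of $\tfrac{d^+}{dr}u$ noted in the excerpt. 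The boundary analysis at $r_0$ is then the only remaining delicate point, and it is controlled by the assumption $r_0\in(0,\theta)$, which guarantees $u$ is genuinely locally Lipschitz (hence $\tfrac{d^+}{dr}u(r_0)\in\R$) near $r_0$.
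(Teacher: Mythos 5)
Your proposal is correct, but it follows a genuinely different route from the paper's. The paper mollifies: it sets $\tilde u = u\star\phi_\epsilon$, checks that the mollified function satisfies $\tilde u''\le -\kappa\tilde u$ pointwise (using the distributional inequality for $u$ and nonnegativity of $\phi_\epsilon$), applies the \emph{classical} Sturm comparison theorem from \cite{docarmo} to the smooth $\tilde u$, and then passes to the limit $\epsilon\to 0$, using the right-continuity of $\frac{d^+}{dr}u$ to ensure $\tilde u'(r_0)\to\frac{d^+}{dr}u(r_0)$ for a suitably chosen (one-sided) mollifier. You instead prove the comparison directly at the low-regularity level by the Wronskian argument: set $v(r)=u(r_0)\cos_\kappa(r-r_0)+\frac{d^+}{dr}u(r_0)\sin_\kappa(r-r_0)$, $w=u-v$, $g=\sin_\kappa(\cdot-r_0)$, show the Wronskian $W=gw'-g'w$ is nonincreasing since $W'=g(w''+\kappa w)\le 0$ as a measure, check $W\to 0$ as $r\downarrow r_0$, and conclude $(w/g)'\le 0$ with $w/g\to 0$, hence $w\le 0$. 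Both proofs are valid. The paper's buys brevity by quoting the smooth Sturm theorem, at the cost of justifying the passage to the limit (which, as you note, hides a small subtlety about which one-sided derivative the mollified derivative converges to — a one-sided mollifier is really required there). Yours avoids mollification and convergence questions entirely, at the cost of carrying out the product-rule / BV bookkeeping by hand; you correctly flag that as the main technical point and outline an adequate fix via the right-continuous representative $\frac{d^+}{dr}w$. One small imprecision: the inequality chain ``$\lim_{r\downarrow r_0}W(r)\le -w'(r_0^+)\le 0$'' is not quite what the computation gives — since $g(r_0)=w(r_0)=0$, $g'(r_0)=1$ and $\frac{d^+}{dr}w(r_0)=0$, the limit is exactly $0$; this makes the conclusion $W\le 0$ on $(r_0,\theta)$ still correct. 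Also, $r_0\in(0,\theta)$ already guarantees $\frac{d^+}{dr}u(r_0)\in\R$ (as stated above the lemma), so the ``vacuous if $=\infty$'' caveat is unnecessary; you do notice this yourself at the end. Your observation that the $\sigma_\kappa$-concavity hypothesis \eqref{kuconcavity} with $u>0$ forces $\theta<\pi_\kappa$ (so $g>0$ on all of $(r_0,\theta)$) is correct and is the right non-circular way to dispatch the $\kappa>0$ case.
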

\begin{proof}
Consider $\phi\in C^{\infty}_0((-1,1))$ with $\int_{-1}^1\phi(t)dt=1$ and $\phi_{\epsilon}(t)=\frac{1}{\epsilon}\phi(\frac{t}{\epsilon})$.
We set
\begin{align*}
\tilde{u}(s)=u\star\phi_{\epsilon}(s)=\int_{-\epsilon}^\epsilon\phi_{\epsilon}(-r)u(s-r)dr=\int_{s-\epsilon}^{s+\epsilon}\phi_{\epsilon}(t-s)u(t)dr
\end{align*}
for $s\in (\epsilon, \theta-\epsilon)$. We choose $\epsilon>0$ small enough such that $r_0\in (\epsilon,\theta-\epsilon)$. Then
\begin{align*}
\tilde{u}''(s)=(u\star \phi_{\epsilon})''(s)
=\int_0^\theta\phi_{\epsilon}''(t-s)u(t)dt
&\leq -\VK\int_{-\epsilon}^{\epsilon}\phi_{\epsilon}(-r)u(r-s)dr\leq -\VK \tilde{u}(s).
\end{align*}
Hence, by  classical Sturm comparison \cite{docarmo} we obtain
\begin{align*}
\tilde{u}(r)\leq \tilde{u}(r_0)\cos_{\kappa}(r-r_0)+ \tilde{u}'(r_0)\sin_{\kappa}(r-r_0)\  \mbox{ on }(r_0,\theta-\epsilon).
\end{align*}
Now, one can check that $\tilde{u}(r)=\phi_{\epsilon}\star u(r)\rightarrow u(r)$ on $(\epsilon_0,\theta-\epsilon_0)$ if $\epsilon\in (0,\epsilon_0)$ and $\epsilon\rightarrow 0$, and also
$$
\tilde{u}'(r_0)=\frac{d^+}{dr}u(r_0)= \int_{-\epsilon}^0 \phi(-r)\frac{d^+}{ds}[u(s-r)]_{s=r_0} dr= \phi_{\epsilon}\star \frac{d^+}{dr}u(r_0)\rightarrow  \frac{d^+}{dr}u(r_0).
$$
Hence, we obtain that 
\begin{align*}
u(r)\leq u(r_0)\cos_{\kappa}(r-r_0)+ \frac{d^+}{dr} u(r_0)\sin_{\kappa}(r-r_0)\ \mbox{ for } r\in (r_0,\theta-\epsilon_0).
\end{align*}
 Finally, since we can choose $\epsilon_0>0$ arbitrarily small, we obtain the result.
\end{proof}
%

\begin{definition}\label{jacobian}
Let $K\in \mathbb{R}$, $H\in (-\infty,\infty)$, $N> 1$. The Jacobian function is defined as 
\begin{align*}
r\in \mathbb{R}\mapsto J_{H,K,N}(t)=\left(\cos_{K/{N-1}}(r)+\frac{H}{N-1}\sin_{K/(N-1)}(r)\right)_+^{N-1}.
\end{align*}
$J_{H,K,N}$ is pointwise monotone non-decreasing in $H$ and $K$, and monotone non-increasing in $N$.
\end{definition}  

\begin{corollary}\label{cor:key}{
Let $h:(a,b)\rightarrow (0,\infty)$ such that $a<0<b$ and }
for any constant speed geodesic $\gamma:[0,1]\rightarrow (a,b)$ it holds
\begin{align*}
h(\gamma_t)^{\frac{1}{N-1}}\geq \sigma_{K/N-1}^{(1-t)}(|\dot\gamma|)h(\gamma_0)^{\frac{1}{N-1}}+\sigma_{K/N-1}^{(t)}(|\dot\gamma|)h(\gamma_1)^{\frac{1}{N-1}}.
\end{align*}
Then
$
h(r)h(0)^{-1}\leq J_{K,H,N}(r) \mbox{ for }r\in (0,b)
$
where 
$
H=
\frac{d^{+}}{dr}\log h(0).
$
\end{corollary}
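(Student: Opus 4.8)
The plan is to reduce Corollary \ref{cor:key} to Lemma \ref{lem:comparison} applied to the function $u := h^{\frac{1}{N-1}}$ with $\kappa := \frac{K}{N-1}$. First I would observe that the hypothesis on $h$ is precisely the statement that $u = h^{\frac{1}{N-1}}$ satisfies the $\sigma_\kappa$-concavity inequality \eqref{kuconcavity} on $(a,b)$; indeed this is exactly the form of \eqref{kuconcavity} after substituting $\kappa = K/(N-1)$. Hence $u:(a,b)\to(0,\infty)$ is semiconcave, locally Lipschitz, satisfies $u'' + \kappa u \le 0$ distributionally, and the one-sided derivatives $\frac{d^+}{dr}u(r)$, $\frac{d^-}{dr}u(r)$ exist everywhere and are finite on the open interval $(a,b)$, as recorded in the discussion preceding Lemma \ref{lem:comparison}. (Strictly, that discussion is phrased for a domain $[0,\theta]$, but the statements are local on the open interval, so one can either work on a slightly smaller compact subinterval $[0,\theta]\subset(a,b)$ containing $0$ and $r$, or simply translate; I would spell out this minor reduction.)

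Next I would apply Lemma \ref{lem:comparison} with $r_0 = 0 \in (a,b)$, which gives
\begin{align*}
u(r) \le u(0)\cos_\kappa(r) + \frac{d^+}{dr}u(0)\,\sin_\kappa(r) \quad \text{on } (0,b).
\end{align*}
Dividing by $u(0) > 0$ and raising to the power $N-1$ (the right-hand side is positive on $(0,b)$ by the final assertion of Lemma \ref{lem:comparison}, so this is legitimate and the subscript $(\cdot)_+$ in the definition of $J$ is automatically satisfied here), I obtain
\begin{align*}
\frac{h(r)}{h(0)} = \left(\frac{u(r)}{u(0)}\right)^{N-1} \le \left(\cos_\kappa(r) + \frac{1}{u(0)}\frac{d^+}{dr}u(0)\,\sin_\kappa(r)\right)^{N-1}.
\end{align*}

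It then remains to identify the coefficient $\frac{1}{u(0)}\frac{d^+}{dr}u(0)$ with $\frac{H}{N-1}$ where $H = \frac{d^+}{dr}\log h(0)$, so that the right-hand side becomes exactly $J_{H,K,N}(r)$ with $\kappa = K/(N-1)$ as in Definition \ref{jacobian}. This is the one genuine (though still routine) computation: by the chain rule for one-sided derivatives of the composition $t\mapsto \frac{1}{N-1}\log h(t)$, valid since $h$ is locally Lipschitz and positive, one has $\frac{d^+}{dr}\log h(0) = (N-1)\frac{d^+}{dr}\log u(0) = (N-1)\frac{1}{u(0)}\frac{d^+}{dr}u(0)$, hence $\frac{1}{u(0)}\frac{d^+}{dr}u(0) = \frac{H}{N-1}$. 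Substituting this in finishes the proof.

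I do not anticipate a serious obstacle: the essential content is already packaged in Lemma \ref{lem:comparison}, and the work here is bookkeeping — correctly passing between $h$ and $h^{1/(N-1)}$, checking that the one-sided derivative behaves well under the logarithm and the power map (which follows from local Lipschitz continuity and positivity of $h$), and matching notation with Definition \ref{jacobian}. If anything needs care it is the mild domain mismatch between the ambient discussion (stated on $[0,\theta]$) and the open interval $(a,b)$ with $a<0<b$ appearing here; I would handle it by restricting to a compact subinterval about $0$, on which all the cited one-sided-derivative facts apply verbatim, and then let the subinterval exhaust $(0,b)$.
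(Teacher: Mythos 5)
Your proposal is correct and mirrors the paper's own proof step for step: apply Lemma \ref{lem:comparison} to $u=h^{1/(N-1)}$ with $\kappa=K/(N-1)$ and $r_0=0$, then use the one-sided chain rule to rewrite $\frac{d^+}{dr}u(0)=\frac{1}{N-1}u(0)\frac{d^+}{dr}\log h(0)$, divide by $u(0)$, and raise to the power $N-1$. The extra care you take about the domain $(a,b)$ versus $[0,\theta]$ and about the $(\cdot)_+$ being harmless is sound but was left implicit in the paper.
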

\begin{proof}
Applyng Lemma \ref{lem:comparison} to $h^{\frac{1}{N-1}}$ yields
\begin{align*}
h^{\frac{1}{N-1}}(r)\leq h^{\frac{1}{N-1}}(0) \cos_{K/(N-1)}(r) + \frac{d^+}{dr} h^{\frac{1}{N-1}}(0) \sin_{K/(N-1)}(r).
\end{align*}
Now, we note that $\frac{d^+}{dr} h^{\frac{1}{N-1}}(0)=\frac{1}{N-1} h^{\frac{1}{N-1}}(0)\frac{d^+}{dt} \log h(0)$, we devide by $h^{\frac{1}{N-1}}(0)$ and apply $(\cdot)^{N-1}$ to both sides.
\end{proof}
\section{Mean curvature in the context of $CD(K,N)$ spaces.}\label{sec:mean}
Let $(X,d,\m)$ be a metric measure space as in Theorem \ref{th:1dlocalisation}.
Let $\Omega\subset X$ be a closed subset, and let $S=\partial \Omega$ such that $\m(S)=0$. The function $d_\Omega: X\rightarrow \mathbb{R}$ is given by 
\begin{align*}
\inf_{y\in \bar{\Omega}} d(x,y)=: d_\Omega(x).
\end{align*}
Let us also define $d_{\Omega}^*:=d_{\overline{\Omega^c}}$. The signed distance function $d_S$ for $S$ is  given by 
\begin{align*}
d_S=d_{\Omega}- d_{\Omega}^*: X\rightarrow \mathbb{R}.
\end{align*}
It follows that $d_S(x)=0$ if and only if $x\in S$, $d_S\leq 0$ if $x\in \Omega$ and $d_S\geq 0$ if $x\in \Omega^c$. 
It is clear that $d_S|_{\Omega}= -d_{\Omega}^*$ and $d_S|_{\Omega^c}=d_\Omega$.
Setting $v=d_S$ we can also write 
$$
d_S(x)= \sign(v(x))d(\{v=0\},x), \forall x\in X.
$$
$d_S$ is $1$-Lipschitz.  $\Omega^\circ$ denote the topological interior of $\Omega$.

%
Let $\mathcal{T}_{d_S,e}$ be the transport set of $d_S$ with endpoints. We have $\mathcal{T}_{d_S, e}\supset X\backslash S$. In particular, we have $\m(X\backslash \T_{d_S})=0$ by Lemma \ref{somelemma}.

Therefore, the $1$-Lipschitz function $d_S$ induces a partition $\left\{X_{\alpha}\right\}_{\alpha\in Q}$ of $X$ up to a set of measure zero for a measurable quotient space $Q$, and a disintegration $\{m_{\alpha}\}_{\alpha\in Q}$ that is strongly consistent with the partition. The subset $X_{\alpha}$, $\alpha \in Q$, is the image of a geodesic $\gamma_{\alpha}:I_\alpha\rightarrow X$. 

We consider $Q^\dagger\subset Q$ as in Remark \ref{dagger}. One has the representation 
$$\m(B)=\int_{Q} \m_{\alpha}(B) d\mathfrak q(\alpha)=\int_{Q^{\dagger}} \int_{\gamma_{\alpha}^{-1}(B)} h_{\alpha}(r) dr d\mathfrak q(\alpha)\ \ \forall B\in \mathcal B.$$ 
For any transport ray $X_{\alpha}$, $\alpha\in Q^{\dagger}$, it holds that
$d_S(\gamma_\alpha(b(X_{\alpha})))\geq 0$ and  $d_S(\gamma_\alpha(a(X_{\alpha})))\leq 0.$
\begin{remark}

It is easy to see that $A:=\mathfrak Q^{-1}(\mathfrak Q(S\cap \T_{d_S}))\subset \T_{d_S}$ is a measurable subset. 
The set
$A\subset \T_u$ is defined such that  $\forall \alpha \in \mathfrak Q(A)$ we have $X_\alpha=\gamma_{\alpha}((a(X_\alpha),b(X_\alpha))\cap S =\{\gamma(t_\alpha)\}\neq \emptyset$ for a unique $t_\alpha\in (a(X_\alpha),b(X_\alpha))$. Then, the map $\gamma(t)\in A \mapsto \gamma(t_\alpha)\in S\cap \T_u$ is a measurable section on $A\subset \T_{d_S}$, one can  identify the measurable set $\mathfrak Q(A)\subset Q$ with $A\cap S$ and one can parametrize $\gamma_\alpha$ such that $t_\alpha=0$. 
Moreover, we define
\begin{align*}
\mbox{$
A\cap \mathfrak \T_u^{\dagger}=: A^{\dagger} \ \mbox{ and }\ \bigcup_{x\in A^{\dagger}}R_u(x)=:A_e^{\dagger}.$}
\end{align*}
The sets $A^\dagger$ and $A_e^\dagger$ are measurable, and also 
\begin{align*}\mbox{$
B_{in}^\dagger:=\Omega^\circ \cap \T^\dagger_{d_S} \backslash A^\dagger\subset \T^\dagger_{d_S}$ \ and \ $B^\dagger_{out}:=\Omega^c \cap \T^\dagger_{d_S} \backslash A^\dagger\subset \T_{d_S}$}
\end{align*}
as well as $\bigcup_{x\in B^\dagger_{out}} R_u(x)=B^\dagger_{out, e}$ and $\bigcup_{x\in B^\dagger_{in}}R_u(x)=:B^\dagger_{in,e}$
are measurable. 
The map $\alpha \in  \mathfrak Q(A^\dagger)\mapsto h_\alpha(\gamma_\alpha(0))\in \R$ is  measurable (see \cite[Proposition 10.4]{cavmil}).
\end{remark}

\begin{definition}\label{def:surfacemeasure}
We define a {\it surface measure} $\m_S$ via
$$
\int\phi(x)d\m_S(x):=\int_{\mathfrak Q(A^{\dagger})} \phi(\gamma_{\alpha}(0)) h_{\alpha}(0) d\mathfrak q(\alpha)
$$
for  any continuous function $\phi:X\rightarrow \R$. That is $\m_S$ is the pushforward of $h_\alpha(\gamma_\alpha(0))\mathfrak q(d\alpha)$ under the map $\gamma\in \mathfrak Q(A^{\dagger})\mapsto \gamma(0)$.
\end{definition}
\begin{remark}
We note that the measure $\m_S$ is by definition concentrated on $S\cap A^{\dagger}$.
\end{remark}
\begin{remark}\label{rem:smooth}
Let us adress briefly the smooth case. 
Let $(M,g,\Psi \vol_g)$ be compact weighted Riemannian manifold.
Let $S\subset M$ (with $S=\partial \Omega$ for $\Omega\in \mathcal B(M)$). Assume that $S$ is an $(n-1)$-dimensional compact $C^2$-submanifold. Then, the signed distance function $d_S$ is smooth on a neighborhood $U$ of $S$ and $\nabla d_S$ is the smooth unit normal vectorfield along $S$. More precisely, $\nabla d_S(x)\perp T_xS$ and $|\nabla d_S(x)|=1$ for all $x\in S$. We denote $\vol_S$ the induced volume for $S$.

Recall that for every $x\in S$ there exist $a_x<0$ and $b_x>0$ such that $\gamma_x(r)=\exp_x(r\nabla d_S(x))$ is a minimal geodesic on $(a_x,b_x)\subset \R$, and we define
\begin{align*}
\mathcal U=\left\{(x,r)\in S\times \R: r\in (\alpha_x,b_x)\right\}\subset S\times \R
\end{align*}
and the map $T:\mathcal U \rightarrow M$ via $T(x,r)=\gamma_x(r)$. It is well-known that $T$ is a diffeomorphism on $\mathcal U$, that $\vol_g(M\backslash T(\mathcal U))=0$ and that integrals can be computed effectively by the following formula:
\begin{align}\label{effectiv}
\int g d\m 
 = \int_S \int_{a_x}^{b_x} g\circ T(x,r) \det DT_{(x,r)}|_{T_xS} \Psi\circ T(x,r) drd\!\vol_S(x).
\end{align} 
On the other hand, we can define a map $\mathfrak Q: T(\mathcal U)\rightarrow S$ via $\mathfrak Q= \mbox{Pr}\circ T^{-1}$ where $\mbox{Pr}: \mathcal U\rightarrow S$ is the projection map.
Then $\mathfrak Q^{-1}(x)=\gamma_x:(a_x,b_x)\rightarrow M$, $x\in S$, are precisely the non-branched transport geodesics w.r.t. $d_S$, $\mathfrak Q^{-1}(S)=\T_{d_S}^b$ and $(a_x,b_x)=(a(X_x),b(X_x))$. Moreover, we see that $$\mathfrak q=\mathfrak Q_{\#}\m=\underbrace{\left[\int_{a_x}^{b_x} \det DT_{(x,r)}\Psi\circ T(x,r) dr\right]}_{=:f(x)} \vol_S(dx).$$ Hence, in this case we can identify $S$ with $Q^\dagger$, and the quotient measure $\mathfrak q$ on $S$ with $\mathfrak q=f(x)\vol_S(dx)$. The integration formula \eqref{effectiv} becomes
\begin{align}\label{effectiv2}
\int g d\m
 = \int_S \frac{1}{f(x)}\int_{a_x}^{b_x} g\circ T(x,r) \det DT_{(x,r)}|_{T_xS}\Psi\circ T(x,r) dr d\mathfrak q(x).
\end{align}
By the uniqueness statement in the disintegration theorem and by \eqref{effectiv2} we therefore have that $h_x(r)= \frac{1}{f(x)}\det DT_{(x,r)}|_{T_{(x,r)}} \Psi \circ T(x,r)S$ and $h_x(0)= \frac{1}{f(x)} \Psi(x)$.

It follows that for a measurable set $B\subset M$ that
$$
\int _{S\cap B}\Psi d\vol_S= \lim_{t\rightarrow 0} \int_{\mathfrak Q(B)} \frac{1}{t}\int_0^{t}h_\alpha(r)dr d\mathfrak q(\alpha)=\m_S(B).
$$
Hence, the measure $\m_S$ coincides with $\Psi d\vol_S$ in this case.
\end{remark}

Let us recall another result of Cavalletti-Mondino.
\begin{theorem}[\cite{cav-mon-lapl-18}]\label{thm:cm} Let $(X,d,\m)$ be an $CD(K,N)$ space, and $\Omega$ and $S=\partial \Omega$ as above. Then
$d_S\in D({\bf \Delta}, X\backslash S)$, and one element of ${\bf \Delta}d_S|_{X\backslash S}$ that we denote with $\Delta d_S|_{X\backslash S}$ is the Radon functional on $X\backslash S$ given by the representation formula
\begin{align*}
\Delta d_S|_{X\backslash S}=- (\log h_{\alpha})'\m|_{X\backslash S}- \int_Q ( h_{\alpha}\delta_{a(X_{\alpha})\cap \{d_S>0\}} - h_{\alpha}\delta_{b(X_{\alpha})\cap \{d_S<0\}} ) d\mathfrak q(\alpha).
\end{align*}
We note that the Radon functional $\Delta d_S|_{X\backslash S}$ can be represented as the difference of two measures $[\Delta d_S]^+$ and $[\Delta d_S|_{X\backslash S}]^-$ such that 
\begin{align*}
[\Delta d_S|_{X\backslash S}]^+_{reg} - [\Delta d_S|_{X\backslash S}]^-_{reg} = - (\log h_{\alpha})' \ \ \m\mbox{-a.e.}
\end{align*}
where $[\Delta d_S|_{X\backslash S}]^{\pm}_{reg}$ denotes the $\m$-absolutely continuous part in the Lebesgue decomposition of $[\Delta d_S|_{X\backslash S}]^{\pm}$. In particular, $-(\log h_{\alpha})'$ coincides with a measurable function $\m$-a.e.\ .
\end{theorem}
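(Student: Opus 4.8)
The plan is to unfold the definition of the measure-valued Laplacian ${\bf\Delta}$ on the open set $X\setminus S$ — namely, that ${\bf\Delta}f=\mu$ on an open set $U$ means $f\in W^{1,2}_{loc}(U)$ and $\int_U g\,d\mu=-\int_U\langle\nabla g,\nabla f\rangle\,d\m$ for every $g\in\mathrm{Lip}_c(U)$ — and to compute the pairing $-\int_{X\setminus S}\langle\nabla g,\nabla d_S\rangle\,d\m$ by means of the $1D$-localisation of $d_S$ from Theorem \ref{th:1dlocalisation}. Since $d_S$ is $1$-Lipschitz it lies in $W^{1,2}_{loc}(X\setminus S)$ with $|\nabla d_S|=1$ $\m$-a.e.\ there (it is a signed distance function away from $S$), so the pairing is well defined and the entire content is the identification of the resulting Radon functional.

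Apply the localisation to $u=d_S$: this yields the disintegration $\m|_{\mathcal T_{d_S}}=\int_Q h_\alpha\,\mathcal H^1|_{X_\alpha}\,d\mathfrak q(\alpha)$ with locally Lipschitz fibre densities $h_\alpha$, together with $\m(X\setminus\mathcal T_{d_S})=0$ by Lemma \ref{somelemma}, so integrals over $X\setminus S$ become iterated integrals over the rays. The geometric fact I would isolate first is that along $\m$-a.e.\ transport ray $X_\alpha$ the gradient of $d_S$ agrees with the velocity field of $\gamma_\alpha$, up to the global sign fixed by the orientation of the rays: since $d_S\circ\gamma_\alpha$ is, up to that sign, the arclength parameter, locality of minimal weak upper gradients and the chain rule give $\langle\nabla g,\nabla d_S\rangle(\gamma_\alpha(r))=\pm(g\circ\gamma_\alpha)'(r)$ for a.e.\ $r$. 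Granting this and Fubini, $-\int_{X\setminus S}\langle\nabla g,\nabla d_S\rangle\,d\m=\mp\int_Q\int_{I_\alpha}(g\circ\gamma_\alpha)'(r)\,h_\alpha(r)\,dr\,d\mathfrak q(\alpha)$, where for $\alpha$ in the full-measure set $Q^\dagger$ of Remark \ref{dagger} the fibre $I_\alpha$ meets $\gamma_\alpha^{-1}(S)$ in a single point, which may be removed.

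On each of the two resulting subintervals I would integrate by parts in $r$. The interior terms combine, through the disintegration, into the $\m$-absolutely continuous part $-(\log h_\alpha)'\,\m|_{X\setminus S}$, with $(\log h_\alpha)'$ read off along $\gamma_\alpha$, $\m$-measurable by the measurability built into the localisation and locally $\m$-integrable by the concavity estimate for $h_\alpha^{1/(N-1)}$ together with $N>1$. The boundary terms at the crossing point $\gamma_\alpha(0)\in S$ vanish because $g$ is supported away from $S$; the surviving boundary terms sit at the ray endpoints $\gamma_\alpha(a(X_\alpha))$ and $\gamma_\alpha(b(X_\alpha))$, which for $\alpha\in Q^\dagger\subset\hat Q$ lie in $\mathfrak a\cup\mathfrak b$ by \eqref{somehow}, and — because of the orientation of the rays and the compact support of $g$ in $X\setminus S$ — survive exactly as those endpoints lying in $\{d_S>0\}$, respectively $\{d_S<0\}$. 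Pushing $h_\alpha\,\mathfrak q(d\alpha)$ forward under $\alpha\mapsto\gamma_\alpha(a(X_\alpha))$ and $\alpha\mapsto\gamma_\alpha(b(X_\alpha))$ and collecting the signs assembles the asserted representation of ${\bf\Delta}d_S|_{X\setminus S}$, and in particular shows $d_S\in D({\bf\Delta},X\setminus S)$. The delicate part, and the one I expect to require the most care, is this boundary bookkeeping: the measurability of $\alpha\mapsto h_\alpha(a(X_\alpha)),h_\alpha(b(X_\alpha))$ and of the endpoint maps (cf.\ \cite[Proposition 10.4]{cavmil}), the verification that no endpoint mass is concealed near $S$, and the sign tracking, checked for instance against the smooth weighted case of Remark \ref{rem:smooth}.

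For the last two assertions, take the Jordan decomposition ${\bf\Delta}d_S|_{X\setminus S}=[\Delta d_S|_{X\setminus S}]^+-[\Delta d_S|_{X\setminus S}]^-$ of the functional just produced. Its two endpoint measures are concentrated on the set of ray endpoints, which is $\m$-negligible: off the $\m$-null set $\mathcal T_{d_S,e}\setminus\mathcal T_{d_S}$ of Lemma \ref{somelemma}, it meets each fibre $X_\alpha$ in at most one point because the $X_\alpha$ partition $\mathcal T_{d_S}$, hence has $\m_\alpha$-measure zero for every $\alpha$ and therefore $\m$-measure zero. So the endpoint measures are purely $\m$-singular, the $\m$-absolutely continuous parts of $[\Delta d_S|_{X\setminus S}]^\pm$ in the Lebesgue decomposition come only from $-(\log h_\alpha)'\,\m|_{X\setminus S}$, and hence $[\Delta d_S|_{X\setminus S}]^+_{reg}-[\Delta d_S|_{X\setminus S}]^-_{reg}=-(\log h_\alpha)'$ $\m$-a.e.; in particular the a priori only $\m$-a.e.\ defined function $-(\log h_\alpha)'$ coincides $\m$-a.e.\ with the Borel $\m$-density of $[\Delta d_S|_{X\setminus S}]^+_{reg}-[\Delta d_S|_{X\setminus S}]^-_{reg}$.
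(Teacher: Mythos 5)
The paper does not prove this theorem; it is quoted verbatim from Cavalletti--Mondino \cite{cav-mon-lapl-18}, so there is no internal argument to check your sketch against. Measured against the proof in that reference, your overall architecture is correct: localise along $u=d_S$, turn the Dirichlet pairing into an iterated integral over rays, integrate by parts on each fibre, and collect an absolutely continuous interior contribution together with singular endpoint contributions; your argument for the final two displayed claims (endpoint set is $\m$-null, hence only $-(\log h_\alpha)'$ survives in the $\m$-Lebesgue decomposition) is also sound.

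The step you concede with ``Granting this,'' however, is exactly where the substance of the Cavalletti--Mondino proof sits, and the justification you offer --- ``locality of minimal weak upper gradients and the chain rule'' --- does not carry it. Two problems. First, the theorem is asserted for $CD(K,N)$ spaces, not $RCD$: the Cheeger energy need not be quadratic, so $\langle\nabla g,\nabla d_S\rangle$ is not a bilinear pairing and the definition you unfold is the $RCD$ one; in this generality the measure-valued Laplacian must be formulated through the one-sided objects $D^{\pm}g(\nabla d_S)$, which changes the pairing you have to control. Second, even granting a linear pairing, the identity $\langle\nabla g,\nabla d_S\rangle(\gamma_\alpha(r))=(g\circ\gamma_\alpha)'(r)$ for $\mathfrak q$-a.e.\ $\alpha$ and a.e.\ $r$ is a theorem, not a formal consequence of locality plus a chain rule: the chain rule for weak upper gradients yields only an inequality in one direction along a test plan, and upgrading it to a two-sided identity along the transport rays uses the $d$-cyclical monotonicity of $\Gamma_{d_S}$, the essential non-branching hypothesis, and a Fubini argument through the disintegration --- this is the key lemma in \cite{cav-mon-lapl-18}. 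You flag the endpoint bookkeeping as the delicate part, but once the fibrewise differentiation identity is in hand that bookkeeping is routine; the genuine gap is earlier, in the step you granted yourself.
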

\begin{remark}
In the light of the previous section and since $h_{\alpha}^{\frac{1}{N-1}}$ is semiconcave on $(a(X_\alpha),b(X_\beta))$, $-(\log h_{\alpha})'$ coincides $\m$-a.e. with the function {$\frac{1}{h_{\alpha}}\frac{d^{+/-}}{dr}h_{\alpha}: X\rightarrow \R$} that is defined via
\begin{align*}
p\in \T_{u}^{\dagger}\mapsto \frac{d^{+/-}}{dr}h_{\alpha}(\gamma_{\alpha}(r)) \ \mbox{ if }p=\gamma_{\alpha}(r)\mbox{ for }r\in (a(X_\alpha),b(X_{\alpha})).
\end{align*}
Hence,  $\frac{d^{+}}{dr}h_\alpha\ \&\ \frac{d^{-}}{dr}h_\alpha$  are measurable functions on $X$ and everywhere defined on $\T_{d_S}^{\dagger}$.
\end{remark}
\begin{definition}\label{def:meancurvature}
Set $S=\partial \Omega$ and let $\{X_{q}\}_{q\in Q}$ be the induced disintegration. 

We say that $S$ has finite outer curvature if $\m(B^\dagger_{out})=0$, $S$ has finite inner curvature if $\m(B^\dagger_{in})=0$, and $S$ has finite curvature if $\m(B^\dagger_{out}\cup B^\dagger_{in})=0$.

Provided $S$ has finite outer curvature we define the outer mean curvature of $S$ as 
\begin{align*}
p\in S\mapsto 
H^+(p)= \begin{cases}
\frac{d^+}{dr}\log h_\alpha(\gamma_\alpha(0))
& \mbox{ if }
\ p=\gamma_\alpha(0)\in S\cap A^{\dagger}\\
-\infty&  \mbox{ if } \ p\in B^\dagger_{in, e}\cap S\\
c \mbox{ for some }c\in \R & \mbox{ otherwise.}
\end{cases}
\end{align*}
If we switch the roles of $\Omega$ and $\overline{\Omega^c}$ and $S$ has finite inner  curvature, then we call the corresponding outer mean curvature the inner mean curvature and we write $H^-$. 

For $S$ with finite curvature the mean curvature is defined as $\max\{ H^+, -H^-\}=:H$.
\end{definition}

\begin{remark}\label{rem:classicalH} Let us again go back to the smooth situation of Remark \ref{rem:smooth}.
In this case $r\mapsto h_{\alpha}(r)=\det DT_{(\alpha,r)}|_{T_{(\alpha,r)}S}$, $\alpha\in S$, is smooth on the maximal open interval $(a(X_\alpha),b(X_\alpha))$ where $\gamma_\alpha$ is a geodesic. 
%
Moreover, $T:\mathcal U\rightarrow M$ is a smooth map.  Hence
\begin{align*}
\frac{d}{dr}\Big|_{0} \log h_\alpha(r) &= \mbox{tr}^{T_{(\alpha,r)}S} \frac{d}{dr}\Big|_0 DT_{(\alpha,r)}|_{T_{(\alpha,r)}S} =- \mbox{Div}^{T_{(\alpha,r)}S} \nabla d_{S}(\alpha)=- \langle {\bf H}(\alpha),\nabla d_S(\alpha)\rangle 
\end{align*}
for 
$\alpha\in S$
where ${\bf H}=H \nabla d_S$ denotes the mean curvature vector along $S$. We conclude that in this case our notion of mean curvature coincides with the classical one.
\end{remark}

\begin{remark}\label{rem:ballcondition}
As was pointed out to the author by the referee the property of having finite outer/inner curvature in the sense of the previous definition corresponds to an interior/exterior ball condition for $\Omega$ what is a condition on the full second fundamental form of the boundary $\partial \Omega$ in smooth context.
\end{remark}

\begin{remark}
The definition of mean curvature as given in Definition \ref{def:meancurvature} is sufficient for the Heintze-Karcher theorem. But considering Theorem \ref{thm:cm} one might define the mean curvature of $S=\partial \Omega$ as the measure given by
\begin{align*}
H|_{S\cap A^\dagger} d\m_S + \int_{\mathfrak Q(B^\dagger_{out})} h_{\alpha}\delta_{a(X_{\alpha})\cap S}d\mathfrak q(\alpha) -\int_{\mathfrak Q(B^\dagger_{in})} h_{\alpha}\delta_{b(X_{\alpha})\cap S} d\mathfrak q(\alpha).
\end{align*}
\end{remark}
%
%
\section{Proof of the main theorems}
{\it Proof of Theorem \ref{main}}.
Let $\Omega\subset X$ be closed subset, $S=\partial \Omega$ and $d_S$ as before.
Consider 
\begin{align*}
S^+_t= B_t(\Omega)\backslash \Omega \ \ \&\ \ 
S^-_t= B_t(\Omega)\backslash \overline{\Omega}^c,
\end{align*}
where  $B_t(\Omega)=\left\{x\in X: \exists y\in \Omega \mbox{ s.t. }x\in B_t(y)\right\}$.
One has $(X_{\alpha}, d)\equiv [a(X_\alpha),b(X_\alpha)]$ via $\gamma_{\alpha}$.
One can check that
 $$S^+_t\cap X_\alpha\equiv [0,b(X_\alpha)\wedge t],\ \ \ S^-_t\cap X_\alpha\equiv [a(X_\alpha)\vee -t,0],\ \ \ \forall t\in (0,\infty).$$
We assume finite outer curvature, that is $\m(B_{out})=0$. Moreover, $B^\dagger_{in}\subset \Omega^\circ$ and therefore $\m(S^+_t\cap B^\dagger_{in})=0$. Let $D=\diam_X$.
%
 Theorem \ref{th:1dlocalisation} ($1D$-localisation) and Corollary \ref{cor:key} yield
\begin{align*}
\m(S_t^+)=&\m(S_t^+\cap \T_u^\dagger)=  \m((S_t^+\cap \T^\dagger_u)\backslash (B^\dagger_{out}\cup B^\dagger_{in}))= \int_{\mathfrak Q(A^{\dagger})} \int_{S_t^+\cap X_\alpha} h_\alpha (r) dr d\mathfrak q(\alpha)\\
 \leq & \int_{\mathfrak Q(A^{\dagger})} \int_0^{t} J_{H^+(\gamma_{\alpha}(0)),K,N}(r) dr h_{\alpha}(0) d\mathfrak q(\alpha)
\leq  \int \int_0^t J_{H^+(p),K,N}(r)dr d\!\m_S(p).
\end{align*}
This is the first claim in Theorem \ref{main}.

Now, we assume finite  curvature.
By switching the roles of $\Omega$ and $\Omega^c$ we obtain similarly
\begin{align*}
\m(S_t^-)
&\leq \int\int_0^t J_{H^-(p),K,N}(r)dr d\!\m_S(p)\leq  \int \int_0^{D} J_{H^-(p),K,N}(r)dr d\!\m_S(p)\\
&\ \ \ \ \ \ \ \ \ \ = \int \int_{-D}^0 J_{-H^-(p),K,N}(r)dr d\!\m_S(p)\leq \int \int_{-D}^0 J_{H(p),K,N}(r)dr d\!\m_S(p).
\end{align*}
Note that by the symmetries of $\sin_{K/(N-1)}$ and $\cos_{K/(N-1)}$ we have that $J_{-H,K,N}(r)=J_{H,K,N}(-r)$. 
In the last inequality we use $H=\max\{H^+, -H^-\}$.
Hence
\begin{align*}
\m(X)= \m(S^-_D)+\m(S^+_D)\leq\int\int_{-D}^D J_{H(p),K,N}(r) dr d\!\m_S(p)
\end{align*}
This proves Theorem \ref{main}. \qed

{\it Proof of Corollary \ref{corollary}}. Let us prove the second claim of the corollary. The first one follows from Theorem \ref{main}.  
Assume $K\geq 0$, $S$ has finite curvature and $H\leq 0$. Then $J_{H,0,N}(r)\leq 1$ on $(0,\infty)$ and from the proof of the main theorem we get that
\begin{align*}
\m(S_t^+\cup S_t^-)\leq \int t\wedge\left[ b(X_\alpha) -a(X_\alpha)\right] h_\alpha(0) d\mathfrak q(\alpha) \leq \diam_X \m_S(S).
\end{align*}
When we let $t\rightarrow \infty$, the claim follows. 

{\it Proof of Corollary \ref{corollary1}}.  Let $K>0$. 
Consider 
$r\in I\mapsto f(r)=\cos_{K/(N-1)}(r)+\frac{H}{N-1}\sin_{K/(N-1)}(r)$
where $I$ is the connected component of $\overline{\{f(r)>0\}}$ that contains $0\in \R$. $f$ solves $f''+\frac{K}{N-1}f=0$ on $I$ and a straighforward computation yields
$$
(f')^2+\frac{K}{N-1} f^2 = \frac{K}{N-1}+\left(\frac{H}{N-1}\right)^2.
$$
We set $\kappa:= \frac{K}{N-1}+\left(\frac{H}{N-1}\right)^2$. We can see that up to translation $f:I\rightarrow [0,\infty)$ must coincide with $\sqrt{\kappa}\sin_{K/(N-1)}:[0,\pi_{K/(N-1)}]\rightarrow [0,\infty)$.
Hence
\begin{align*}
\int_\R J_{K,H(p),N}(r) dr = (\sqrt{\kappa})^{N-1} \int_0^{\pi_{K/(N-1)}}\sin_{K/(N-1)}^{N-1}(r) dr.
\end{align*}
We can plug this back into the Heintze-Karcher inequality \eqref{ineq:hk} and obtain Corollary \ref{corollary1}. \qed


{\it Proof of Theorem \ref{main2}}. 
Assume $K>0$ and equality in the Heintze-Karcher estimate \eqref{ineq:hk}, or equivalently assume equality in Corollary \ref{corollary1}.

Then, all the inequalities in the proof before become equalities. In particular, from Corollary \ref{cor:key} we obtain that 
$$h_{\alpha}(r)= h_{\alpha}(0) J_{H(\gamma_{\alpha}(0)),K,N}(r) \mbox{ on } [a(X_{\alpha}),b(X_\alpha)]\ \ \forall \alpha\in \mathfrak Q(A^\dagger).$$
Recall that $h_\alpha(r) dr=\m_\alpha$ is a probability measure for $\alpha\in Q^\dagger$ by construction of the disintegration.
Plugging that back into the Heintze-Karcher inequality yields
\begin{align*}
\m(\Omega)\cup \m(S^+_t)=\m(B_t(\Omega))= \int\int_{-\infty}^t J_{H(p),K,N}(r) dr d\!\m_{S}(p) \ \ \forall t>0.
\end{align*}
We consider $\bar \m=\m(X)^{-1}\m$. The corresponding Minkowski content computes as
\begin{align*}
\bar m^+(\partial \Omega)&=\int_Q J_{H(\gamma_{\alpha}(0)),K,N}(0)h_\alpha(0)d\bar{\mathfrak q}(\alpha)= \int_Q \mathcal{I}_{K,N,\infty}(v_\alpha) d\bar{\mathfrak q} (\alpha)
\end{align*}
where $\bar{\mathfrak q}=\m(X)^{-1}\mathfrak q$ is a probability measure and $v_{\alpha}:= \int_{-\infty}^0 J_{H(\gamma_{\alpha}(0)),K,N}(r) h_\alpha(0) dr=\m_{\alpha}((a(X_\alpha),0))$. We also observe that 
\begin{align*}
\int_Q \int_{-\infty}^0 J_{H(\gamma_\alpha(0)),K,N}(r) h_\alpha(0) dr d\bar{\mathfrak q}(\alpha)= \int_Q v_{\alpha} d\bar{\mathfrak q}(\alpha)=\int_Q\frac{\m_\alpha((a(X_\alpha),0)}{\m(X)} {d\mathfrak q(\alpha)} =\frac{\m(\Omega)}{\m(X)}.
\end{align*}
We set $f(t)=\frac{1}{c}\int_0^t\sin_{K/(N-1)}^{N-1}(r) dr$. $f^{-1}: [0, 1]\rightarrow [0,\infty)$ exists and is monotone nondecreasing where $c=\int_0^{\pi_{K/(N-1)}} \sin_{K/(N-1)}(r)dr$. One can check that $v\in [0,1]\mapsto \mathcal I_{K,N,\infty}(v)=f'\circ f^{-1}(v)=:h(v)$.
Moreover, we compute
\begin{align*}
h'(v)= \cos_{K/(N-1)}\circ f^{-1} (v)\left[ f' \circ f^{-1}(v)\right]= \frac{\cos_{K/(N-1)}}{\sin_{K/(N-1)}} \circ f^{-1}(v).
\end{align*}
We see that $h'$ is monotone nonincreasing, hence $h$ is concave.
Jensen's inequality yields that
\begin{align*}
\bar\m^+(\partial \Omega)&= \int_Q \mathcal{I}_{K,N,\infty}(v_\alpha) d\bar{\mathfrak q}(\alpha)\leq  \mathcal{I}_{K,N,\infty}\left(\int v_{\alpha} d\bar{\mathfrak q}(\alpha)\right) = \mathcal I_{K,N,\infty}( \bar\m(\Omega)).
\end{align*}
Hence by Theorem \ref{th:cavmon} there is equality and Theorem \ref{th:cavmon} yields the result. \qed
\small{
\bibliography{new}

\bibliographystyle{amsalpha}}
\end{document}